\newcommand{\blue}[1]{\textcolor{\blue}{#1}}
\newcommand{\C}{\ensuremath{\mathbf{C}}}
\newcommand{\Rn}[1][m]{\ensuremath{\mathbf{R}^{#1}}}
\DeclareMathOperator{\vecop}{vec}
\newcommand{\set}[1]{\ensuremath{\left\{ #1 \right\}}}
\newtheorem{theo}{Theorem}[section]
\newtheorem{lem}[theo]{Lemma}
\newtheorem{remark}[theorem]{Remark}
\newcommand{\bC}{\mathbf{C}}
\newcommand{\bCbar}{\mathbf{\bar{C}}}
\newcommand{\bT}{\mathbf{T}}
\newcommand{\bQ}{\mathbf{Q}}
\newcommand{\Lop}{\ensuremath{\mathcal{L}}}
\newcommand{\MK}[1]{#1}
\title{Preconditioning a coupled model \\ for reactive transport in
  porous media \thanks{This work was partially supported by GNR MoMaS, CNRS-2439 (PACEN/CNRS, ANDRA, BRGM, CEA, EDF, IRSN), and by the
  Hydrinv(EuroMediterrannean 3+3) project}
}
\author{Laila Amir\footnotemark[2]
\and Michel Kern\footnotemark[3]
}
\begin{document}
\maketitle

\renewcommand{\thefootnote}{\fnsymbol{footnote}}
\footnotetext[2]{Facult\'e des sciences et techniques, UCAM, Marrakech,
  Morocco, \texttt{l.amir@uca.ma}}
\footnotetext[3]{Centre de recherche Inria de Paris, 2 rue Simone Iff,
  75012 Paris, France, \texttt{michel.kern@inria.fr}}
\renewcommand{\thefootnote}{\arabic{footnote}}

\begin{keywords}
Reactive transport, porous media, precondtioning, non-linear systems
\end{keywords}
\begin{AMS}
76S05, 65H10, 65F10, 65N22, 
\end{AMS}

\begin{abstract}
We study numerical methods for solving reactive transport problems in
porous media that allow a separation of transport and chemistry at the
software level, while keeping a tight numerical coupling between both
subsystems. 
We give a formulation that
  eliminates the local chemical concentrations and keeps the total
  concentrations as unknowns, then recall how each individual
  subsystem can be solved. The coupled system is solved by a
  Newton--Krylov method.
The block structure of the model is exploited both 
at the nonlinear level, by eliminating some unknowns, and at the linear level
 by using block Gauss-Seidel or block Jacobi preconditioning.
The methods are applied to a 1D case of the MoMaS benchmark. 
\end{abstract}

\pagestyle{myheadings}\thispagestyle{plain}
\markboth{L. Amir and M. Kern}{Preconditioning for reactive transport}

\section{Introduction}
\label{sec:intro}

Reactive transport in porous media studies the coupling of reacting chemical
species with mass transport in the subsurface,
see~\cite[sec. 7.9]{bear:2010} for an introduction,
or~\cite{AppelPost:05,bethke:08} for more comprehensive
references. It plays an important role in several applications
when modeling subsurface
flow~\cite{Steefel2015,steeflicht:05,zhangyehparker:reactransbook} :  
\begin{itemize}
\item chemical trapping is one of the
mechanism by which the safety of geological sequestration of carbon
dioxide in deep saline aquifers can be
ensured~\cite{audigane:hal-00564444,fan2012fully,12985679,trentymichelECMOR:05}
\item nuclear waste storage is based on a multiple barrier concept so
  as to delay the arrival of radionuclides in the bio-sphere. The
  concrete barrier that seals the repository may be attacked by
  oxidized compounds, and again its safety needs to be assessed~
  \cite{windtvdl:04,Marty200958,Samper2010278};
\item bio-geochemistry involves interactions with organic chemicals, and
  is important for studies of soil pollution, and also for
  implementing bio-remediation policies~\cite{WRCR:WRCR7807,MARZAL1994363}.
\end{itemize}

\medskip
The numerical simulation of reactive transport has been the topic of
numerous work. The survey by Yeh and Tripathi~\cite{yehtrip} has been
very influential in establishing a mathematical formalism for setting
up models, and also for establishing the ``operator splitting''
approach (see below) as a standard. More recent surveys, detailing
several widely used computer codes and their applications, can be found
in the book~\cite{zhangyehparker:reactransbook} and the \MK{survey}
article~\cite{Steefel2015}. 

\MK{One traditionally distinguishes} two main approaches for solving
reactive transport problems:
\begin{description}
\item[Sequential Iterative Approach (SIA)] \MK{in this family of
    methods (also known as  operator splitting), transport and chemistry are solved
  alternatively}~\cite{carrmosbeh:04,lagneau:10,WRCR:WRCR9205,Samper2008,WRCR:WRCR5571}.
  This has the advantage that a code for reactive transport can be
  built from pre-existing transport and chemistry
  codes~\cite{jaradreuzycoche:compgeo17}, and that no global system of
  equations needs to be solved. On the other hand, the splitting between
  chemistry and transport may restrict the time step in order to
  ensure convergence of the method, and the splitting may also
  introduce mass errors that need to be
  controlled~\cite{WRCR:WRCR5757}. As the above references show, the
  method can be quite successful if it is implemented carefully.
\item[Globally implicit Approach (GIA)] \MK{Here one solves the fully
    coupled system involving transport and chemistry is solved in one
    shot (see for instance~\cite{Fahs2008,sacaay:01} and the
    references below).} The balance is the opposite of what it was for SIA: the
    method does not introduce spurious mass errors, and can converge
    with large time steps, but it leads to a large and difficult to
    solve system of non-linear equations coupling all the chemical
    species at all grid points. \MK{The system can be solved by
    substituting the mass action laws into the conservation equations,
    a variant known as Direct Substitution approach (DSA) as
    in~\cite{fan2012fully,hammvallicht05}. Recently, methods that
    solve the coupled problem without direct substitution have been
    introduced. In~\cite{Hoffmann2009,knabner05,knabner07},
    local chemical systems are solved, and the solution becomes an additional term in a transformed form of the
    transport. In~\cite{deDieuleveult2010,Erhel:2017:AGR:3067934.3068012,ehrsabdieul:13},
    the overall coupled system is solved as a Differential Algebraic
    System. 
}
  \end{description}


In addition to the surveys mentioned above, the methods have been
compared in several studies or
benchmarks~\cite{Momascompar,Marty2015,sacaay:00}. It is fair to say
that no method or code emerges as a clear winner for all situations. 

We finish this short (and far from exhaustive) review of the
literature by noting that most of the work cited deal with one-phase
flow and transport. The methods have recently been extended to the
case of two-phase
flow~\cite{refId0,fan2012fully,12985679,SIN201762} . 

\medskip \MK{In a previous paper~\cite{compgeo:10b}, the authors
  introduced a method that belongs to the GIA family without
  sacrificing the ease of implementation of the SIA methods, due to the
  separation of software modules for chemistry
  and transport. The method solves the nonlinear coupled system by a
  Jacobian-free Newton--Krylov method. 

This paper presents several improvements to~\cite{compgeo:10b}:
 \begin{itemize}
 \item a systematic study of preconditioning methods for the
   \emph{linearized coupled problem}, and their relationship to elimination
   methods. This results in a method where the mobile concentrations
   are eliminated, and one solves for the total fixed
   concentrations. We give both heuristic arguments and experimental
   evidence that the convergence rate for both the Newton and GMRES
   iterations is bounded independently of the mesh size;
\item a formulation of the \emph{chemical equilibrium problem} that does not
  need the a priori separation of the chemical species between primary
  and secondary components, but that keeps the distinction between
  mobile and immobile species.
 \end{itemize}

 The method originally introduced in~\cite{compgeo:10b} approximated
 the Jacobian matrix by vector product by finite difference, so that
 only solvers for transport and chemistry were needed, and they could
 be applied as black boxes. In this work, we slightly ``open the
 boxes'', as we propose to compute exactly the Jacobian matrix by vector
 products, so that access to the Jacobians of the transport and
 chemical solvers are needed. We feel the added accuracy and reduced
 cost (for the chemical part, the inverting the jacobian is much less
 expensive than solving the whole system) more than make up for the
 additional requirement on the software.

 Since the convergence of Krylov solvers can be slow, we devote a
 significant part of the paper to the analysis of preconditioning
 methods. We show that block preconditioning is equivalent to the
 elimination of variables, and that the elimination can be performed
 directly on the non-linear system.

In a different direction, this paper also relaxes the reliance on the
Morel formulation that requires that one identifies a priori a set of
principal and secondary species. We show that the coupled problem can
be formulated in the same way as before, but that the various totals
involved can be defined in a more intrinsic way. This may be seen as a
particular case of the general reduction mechanism of Knabner et
al~\cite{Hoffmann2009,knabner07}, but we believe the simpler
formulation may be of interest to practitioners. The formulation also
leads to a method for solving the chemical system that makes uses of
orthogonal matrices and may lead to better conditioned Jacobians (this
remains to be investigated). 

In this work, we consider only a
simplified physical and chemical setup (one phase flow, no mineral
reactions, no kinetic reactions) so as to concentrate on the numerical
issues related to the coupling between transport and
chemistry. Generalization to more realistic situations (2D and 3D
  geometries, kinetic reactions, presence of mineral species) will be the
  topic of future studies.
}



\medskip
An outline for the rest of this paper is as follows: we set
up the mathematical model in section~\ref{sec:rtm}, and show how to
reduce the problem by eliminating the chemical
concentrations. Numerical methods for solving the local chemical
equilibrium problem and the advection--diffusion equations for
transport are the topic of sections~\ref{sec:chemeq}
and~\ref{sec:convnum} respectively. Section~\ref{sec:coupled} deals
with the formulation of the coupled problem, and its solution by a
Newton--Krylov method. Preconditioners for the linear system and
elimination methods for the non-linear system are detailed in
section~\ref{sec:precond}.  Finally, in section~\ref{sec:numres} the
methods are validated \MK{on two test cases, including} the 1D MoMaS
reactive transport benchmark, which is a fairly difficult test case
for reactive transport~\cite{Momascompar}.

\section{Reactive transport model}
\label{sec:rtm}

We consider a set of species subject to transport by advection and
diffusion and to chemical reactions in a porous medium \MK{occupying a
  domain $\Omega \subset \Rn[d]$ ($d=1,2,3$)}. \MK{[-]}
The chemical phenomena involve both homogeneous and heterogeneous
reactions. Homogeneous reactions, in the aqueous phase, include water
dissociation, acid--base reactions and \MK{redox} reactions, whereas
heterogeneous reactions occur between the aqueous and solid phases, and
include surface complexation, ion exchange and precipitation and
dissolution of minerals (see~\cite{AppelPost:05} for details on the
modeling of specific chemical phenomena). Accordingly, we assume there
are $N_s$ mobile species $(X_j)_{j=1, \ldots, N_S}$ in the aqueous
phase and $\bar{N}_s$ immobile species in the solid phase
$(\bar{X}_j)_{j=1, \ldots, \bar{N}_S}$, and that there are $N_r$
homogeneous reactions, and $\bar{N}_r$ heterogeneous reactions.

In this work, we only consider equilibrium reactions, which means that
the chemical phenomena occur on a much faster scale than transport
phenomena, see~\cite{rubin:83}. This assumption is justified for
aqueous phase and ion--exchange reactions, but may not hold for
reactions involving minerals. Such reactions should be modeled as
kinetic reactions, with specific rate
laws~\cite{bear:2010,fan2012fully}. 

We can write the chemical system as 
\begin{equation*}
    \sum_{j=1}^{N_s} (S_{cc})_{ij} X_j \leftrightarrows 0 \quad
    i=1, \ldots, N_r \quad \text{ homogeneous reactions}, 
  \end{equation*}
\vspace{-2em}
  \begin{equation*}
  \sum_{j=1}^{N_s}(S_{\bar{c}c})_{ij} X_j  + \sum _{j=1}^{\bar{N}_s} (S_{\bar{c}\bar{c}})_{ij} \bar{X}_j \leftrightarrows 0
     \quad i=1, \ldots \bar{N}_r \quad \text{heterogeneous reactions},
\end{equation*}
or in condensed form
\begin{equation}
  \label{eq:chemequmat}
\MK{S    \begin{pmatrix}    X \\ \bar{X}   \end{pmatrix}
= }
\begin{pmatrix}
   S_{cc} & 0 \\ S_{\bar{c}c} & S_{\bar{c}\bar{c}} 
  \end{pmatrix}
  \begin{pmatrix}    X \\ \bar{X}   \end{pmatrix}
\leftrightarrows
  \begin{pmatrix}   0 \\ 0   \end{pmatrix}.
\end{equation}
We \MK{denote by $S$} the stoichiometric matrix, 
with the sub-matrices $S_{cc} \in \Rn[N_r \times
N_s]$, $S_{\bar{c}c} \in \Rn[\bar{N}_r \times N_s]$ and
$S_{\bar{c}\bar{c}} \in \Rn[\bar{N}_r \times \bar{N}_s]$. 
We assume that both the global stoichiometric matrix $S$ and the
``aqueous'' stoichiometric matrix $S_{cc}$ are of full rank. As there
will usually be more species than reactions, this just
means that all reactions are ``independent'' (in the linear algebra
sense !), and that this is also true of the reactions in the aqueous
phase.  

Each reaction gives rise to a mass action law, linking the activities
of the species. For simplicity, we assume all species follow an ideal
model, so that their activity is equal to their concentration. We
denote by $c_j$ (resp. $\bar{c}_j$) the concentration of species $X_j$
(resp. $\bar{X}_j$). It will be convenient to write the mass action
law in logarithmic form, so for a vector $c$ with positive entries, we
denote by $\log c$ the vector with entries $\log c_j$. We then have
\begin{equation}
  \label{eq:massaction}
\begin{pmatrix}
    S_{cc} & 0 \\ S_{\bar{c}c} & S_{\bar{c}\bar{c}} 
  \end{pmatrix}
  \begin{pmatrix}    \log c \\ \log \bar{c}   \end{pmatrix}
=
  \begin{pmatrix}   \log K \\ \log \bar{K}   \end{pmatrix}, 
\end{equation}
where $K \in \Rn[N_r]$ and $\bar{K} \in \Rn[\bar{N}_r]$ are the
equilibrium constants for their respective reactions. 

\medskip
Next, we write the mass conservation for each species, considering both
transport by advection and diffusion and chemical reaction terms:
\begin{equation}
  \label{eq:massconv}
  \begin{aligned}
    & \phi \partial_t c & + \Lop c &= S_{cc}^T r + S_{\bar{c}c}^T \bar{r}, \\
    & \phi \partial_t \bar{c} & &= S_{\bar{c}\bar{c}}^T \bar{r},
  \end{aligned} \quad \text{in } \Omega \times [0, T_f]
\end{equation}
\MK{where $\Lop$ denotes the advection--diffusion operator (written in
  1D):
 \begin{equation*}
\Lop(c) = \partial_x 
 \left( -D \partial_x c +u c \right),
 \end{equation*}
$\phi$ is the porosity (fraction of void in a Representative
Elementary Volume available for the flow), $u$ is the Darcy velocity
(we assume here permanent flow, so that $u$ is considered as known),
$D$ is a diffusion--dispersion coefficient and $r \in \Rn[N_r]$ and $\bar{r} \in
\Rn[\bar{N}_r]$ are vectors containing the reaction rates. 
We assume that
the diffusion coefficient is independent of the species. This is a
strong restriction on the model, but one that is commonly assumed to
hold~\cite{compgeo:10b,knabner05,saalayorcarr98,yehtrip}. The model is
completed by appropriate initial and boundary conditions.}

Since we
assume that all reactions are at equilibrium, \MK{the reaction rates} are unknown
and we now show how they can be eliminated.

\subsection{Elimination of the reaction rates -- the coupled problem}
We follow the approach of Saaltink et al.\cite{saalayorcarr98} (see
also a more general approach in~\cite{knabner05,knabner07}) by
introducing a \emph{kernel matrix} $U$ such that $ U S^T =0$, i.e.\
such that  columns of $U^T$ form of basis for the null-space of
$S$. This can be done in several ways (see the above references, and
section~\ref{sec:chemeq}). Here we outline how one can compute such a
matrix with the same structure as that of $S$. 

\begin{lem}
\label{lem:kern}
Assume that the matrix $S$ is as defined in equation~\eqref{eq:chemequmat},
that it has full rank, and that the submatrix $S_{cc}$ also has full
rank. Then there exists a \MK{kernel} matrix 
\begin{equation}
  \label{eq:Umat}
    U = \begin{pmatrix}
    U_{cc} & U_{c\bar{c}} \\ 0 & U_{\bar{c}\bar{c}} 
  \end{pmatrix}, 
\quad 
\begin{aligned}
&U_{cc} \in \Rn[(\MK{N_s-N_r}) \times N_s], \; U_{c\bar{c}} \in\Rn[(\MK{N_s-N_r}) \times \bar{N}_s] \\
&  U_{\bar{c}\bar{c}} \in \Rn[(\MK{\bar{N}_s - \bar{N}_r}) \times
\bar{N}_s]
\end{aligned}
\end{equation}
such that $U S^T = 0$.
\end{lem}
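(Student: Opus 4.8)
The plan is to impose the block pattern~\eqref{eq:Umat} on $U$, reduce the requirement $US^T=0$ to conditions on the individual blocks, satisfy those, and then verify that the resulting matrix is a genuine \emph{kernel} matrix, i.e.\ that its rows span the whole null space of $S$ (equivalently, the columns of $U^T$ form a basis of it). Transposing the block lower-triangular $S$ of~\eqref{eq:chemequmat} produces a block upper-triangular $S^T$ with diagonal blocks $S_{cc}^T$, $S_{\bar{c}\bar{c}}^T$ and upper-right block $S_{\bar{c}c}^T$, so that
\begin{equation*}
  U S^T=\begin{pmatrix}
    U_{cc}S_{cc}^T & U_{cc}S_{\bar{c}c}^T+U_{c\bar{c}}S_{\bar{c}\bar{c}}^T \\
    0 & U_{\bar{c}\bar{c}}S_{\bar{c}\bar{c}}^T
  \end{pmatrix}.
\end{equation*}
Thus $US^T=0$ is equivalent to the three conditions $U_{cc}S_{cc}^T=0$, $U_{\bar{c}\bar{c}}S_{\bar{c}\bar{c}}^T=0$ and $U_{cc}S_{\bar{c}c}^T+U_{c\bar{c}}S_{\bar{c}\bar{c}}^T=0$, and in addition I will have to check that the $(N_s-N_r)+(\bar{N}_s-\bar{N}_r)$ rows of $U$ are linearly independent.

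First I would handle the two diagonal conditions, which are decoupled. Since $S_{cc}\in\Rn[N_r\times N_s]$ has full rank, $\dim\ker S_{cc}=N_s-N_r$; I take the rows of $U_{cc}$ to be a basis of $\ker S_{cc}$, so that $U_{cc}S_{cc}^T=0$ and $U_{cc}$ has full row rank. For $U_{\bar{c}\bar{c}}$, note that $S_{\bar{c}\bar{c}}\in\Rn[\bar{N}_r\times\bar{N}_s]$ has only $\bar{N}_r$ rows, hence rank at most $\bar{N}_r$ and $\dim\ker S_{\bar{c}\bar{c}}\ge\bar{N}_s-\bar{N}_r$; I take the rows of $U_{\bar{c}\bar{c}}$ to be any $\bar{N}_s-\bar{N}_r$ linearly independent vectors of $\ker S_{\bar{c}\bar{c}}$, which gives $U_{\bar{c}\bar{c}}S_{\bar{c}\bar{c}}^T=0$ with $U_{\bar{c}\bar{c}}$ of full row rank. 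Both bases can be computed explicitly, for instance from a column-echelon form.

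It remains to make the off-diagonal block vanish, i.e.\ to solve $U_{c\bar{c}}S_{\bar{c}\bar{c}}^T=-U_{cc}S_{\bar{c}c}^T$ for $U_{c\bar{c}}$. If $S_{\bar{c}\bar{c}}$ has full row rank (the typical case: the heterogeneous reactions are independent), then $S_{\bar{c}\bar{c}}S_{\bar{c}\bar{c}}^T$ is invertible, one may simply take $U_{c\bar{c}}=-U_{cc}S_{\bar{c}c}^T(S_{\bar{c}\bar{c}}S_{\bar{c}\bar{c}}^T)^{-1}S_{\bar{c}\bar{c}}$, and $\dim\ker S_{\bar{c}\bar{c}}$ is then exactly $\bar{N}_s-\bar{N}_r$. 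For the rank check: a vanishing linear combination of the rows of $U$ must have zero coefficients on the top block, because $U_{cc}$ has full row rank, and then zero coefficients on the bottom block, because $U_{\bar{c}\bar{c}}$ has full row rank; so $U$ has full row rank $(N_s-N_r)+(\bar{N}_s-\bar{N}_r)$. Since $S$ has full rank, i.e.\ rank $N_r+\bar{N}_r$, its null space has exactly this dimension, so the rows of $U$ form a basis of $\ker S$ and $U$ is a kernel matrix.

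The step I expect to be the real obstacle is the solvability of $U_{c\bar{c}}S_{\bar{c}\bar{c}}^T=-U_{cc}S_{\bar{c}c}^T$: it requires every row of $U_{cc}S_{\bar{c}c}^T$ to lie in the row space of $S_{\bar{c}\bar{c}}^T$, which is automatic only if $S_{\bar{c}\bar{c}}$ has full row rank. To avoid relying on that extra hypothesis I would instead build the kernel basis ``from the bottom up'': first choose $\bar{N}_s-\bar{N}_r$ linearly independent vectors in $\{0\}\times\ker S_{\bar{c}\bar{c}}$ (possible by the inequality above), which automatically belong to $\ker S$ and supply both the zero lower-left block and $U_{\bar{c}\bar{c}}$; then extend them to a basis of $\ker S$ by $N_s-N_r$ further vectors, and read off the first $N_s$ and last $\bar{N}_s$ coordinates of each as the rows of $U_{cc}$ and $U_{c\bar{c}}$. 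Reading the identity $Sv=0$ one block row at a time shows that these new rows automatically satisfy $U_{cc}S_{cc}^T=0$ and $U_{cc}S_{\bar{c}c}^T+U_{c\bar{c}}S_{\bar{c}\bar{c}}^T=0$; the resulting $U$ has the required shape, satisfies $US^T=0$, and has linearly independent rows since they form, by construction, a basis of $\ker S$ --- so no linear system has to be inverted.
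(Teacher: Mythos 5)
Your proposal is correct, and its first half is exactly the route the paper takes: expand $US^T$ blockwise, kill the two diagonal blocks using kernels of $S_{cc}$ and $S_{\bar{c}\bar{c}}$, and then solve the off-diagonal condition for $U_{c\bar{c}}$. Where you go beyond the paper is in isolating the one genuinely delicate step. The paper disposes of it by asserting that $S_{\bar{c}\bar{c}}$ has full (row) rank ``because the full matrix $S$ is block triangular''; that implication is in fact false in general --- e.g.\ $S_{cc}=(1\;0)$, $S_{\bar{c}c}=(0\;1)$, $S_{\bar{c}\bar{c}}=(0\;0)$ gives a full-rank block-triangular $S$ with a rank-deficient $S_{\bar{c}\bar{c}}$, and there the equation $U_{c\bar{c}}S_{\bar{c}\bar{c}}^T=-U_{cc}S_{\bar{c}c}^T$ is unsolvable if $U_{cc}$ is forced to span $\ker S_{cc}$. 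You identified precisely this obstruction, and your ``bottom-up'' construction (take $\bar{N}_s-\bar{N}_r$ independent vectors of $\{0\}\times\ker S_{\bar{c}\bar{c}}$, extend to a basis of $\ker S$, and read off the blocks) proves the lemma as stated, using only the hypotheses that $S$ has full rank (for the dimension count) --- it does not even need $S_{cc}$ to have full rank. The price is that in the degenerate cases your $U_{cc}$ may fail to have full row rank, which the lemma does not require but which the later definitions of $C=U_{cc}c$ and $T$ implicitly want; under the paper's (physically reasonable) extra assumption that $S_{\bar{c}\bar{c}}$ has full row rank, your first construction recovers that property as well. So: same skeleton as the paper, but a complete argument where the paper's is a sketch resting on an unjustified rank claim.
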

\begin{proof}

The existence of a kernel matrix $U$ is well known (see any standard
text on linear algebra, such as~\cite{strang:introLA09}
or~\cite{meyer:bk00}). The content of the lemma is that the kernel
matrix can be chosen with a block upper triangular structure. 

The proof proceeds by computing each block of the product, and showing
that the blocks in $U$ can be chosen as specified. This is true
because both $C_{cc}$ and $C_{\bar{c}\bar{c}}$ are of full rank (the
later because the full matrix $S$ is block triangular).
%
\end{proof}

With the kernel matrix constructed in lemma~\ref{lem:kern}, we can
eliminate the reaction terms in equation~\eqref{eq:massconv}. 
We start by defining the total analytic concentration for the
mobile and immobile species respectively (these are the same as
various total quantities defined in the classical survey by Yeh and
Tripathi~\cite{yehtrip}) 
\begin{equation}
  \label{eq:deftotan}
  \begin{pmatrix}    T \\ \bar{T}   \end{pmatrix} = 
  \begin{pmatrix}
    U_{cc} & U_{c\bar{c}} \\ 0 & U_{\bar{c}\bar{c}} 
  \end{pmatrix}
  \begin{pmatrix}
    c \\ \bar{c}
  \end{pmatrix}.
\end{equation}
We also define the total mobile and immobile
concentrations for the species in the aqueous phase
\begin{equation}
\label{eq:deftot}
    C =  U_{cc}\, c, \quad  \overline{C} =    U_{c\bar{c}}\, \bar{c}, 
\end{equation}
so that the total concentrations are given by
\begin{equation}
\label{eq:deftotal}
  T = C+ \bar{C} = U_{cc}\, c + U_{c\bar{c}}\, \bar{c}.
\end{equation}
These new unknowns are all of dimension $N_c$, where $N_c = N_s-N_r$. 
\MK{We now multiply system~\eqref{eq:massconv} on the left by $U$.}
Because of our assumption that $D$ is the same for all chemical
species, multiplication by $U$ commutes with  the differential
operator, and the system can be rewritten as
\begin{equation}
\label{eq:coupled}
  \begin{aligned}
    &\phi \partial_t C + \phi \partial_t \overline{C} + \Lop C &= 0, \\
    & \phi \partial_t \bar{T} &= 0.
  \end{aligned}
\end{equation}

The coupled system consists of the $(N_s-N_r)+(\bar{N}_s-\bar{N}_r)$
conservation PDEs and ODEs~\eqref{eq:coupled}, together with the $N_r +
\bar{N}_r$ mass action laws~\eqref{eq:massaction} and the relations
connecting concentration and totals~\eqref{eq:deftot} \MK{and the
  second line of~\eqref{eq:deftotan}}, for the $N_s+
\bar{N}_s$ concentrations and $2(N_s-N_r) + \bar{N}_s-\bar{N}_r$
totals. Note that the ODEs for $\bar{T}$ are decoupled from the rest
of the system.  In section~\ref{sec:coupled}, we show how to eliminate
the individual concentrations from the system, so that only the totals
$C$ and $\bar{C}$ remain as unknowns.

\begin{remark}
  The formulation for the chemical system given in
  equations~\eqref{eq:massaction} and~\eqref{eq:deftotan} generalizes
  the well known Morel formulation~\cite{Morel}, where a set of
  ``principal'' species is identified, and the remaining ``secondary''
  species are written in terms of the principal ones.

The stoichiometric matrix $S$ is split naturally in blocks, the mass
action laws allow the elimination of the secondary unknowns, and the
conservation laws lead to a non-linear system of equations for the
principal species. 

In the next section, we show that the ``local chemical system''
\MK{~\eqref{eq:massaction},~\eqref{eq:deftotan} can be solved (for given
$(T, \bar{T})$)} in a
similar way, without having first to explicitly identify the
principal and secondary species. 
\end{remark}

\section{Methods for solving chemistry and transport}

We briefly recall in this section how the chemical equilibrium system,
and the transport equation are solved. 

\subsection{The chemical equilibrium problem}
\label{sec:chemeq}

The subsystem formed by the mass action laws~\eqref{eq:massaction} and
the definition of the totals~\eqref{eq:deftotan} is a closed system
that enables computation of the individual concentrations $c$ and
$\bar{c}$ given the totals $T$ and $\bar{T}$. This is actually the same
system that would be obtained in a closed chemical system where now
$T$ and $\bar{T}$ would be known as the input total concentrations. This
subsystem is what we call in the following the \emph{chemical
  equilibrium problem}. It is a small (its size is the number of
species) nonlinear system, that is notoriously difficult to solve
numerically (see for
example~\cite{Carrayrou2002,Dieuleveult2009,Hoffmann2009,AIC:AIC15506}).

It will be convenient in this subsection to temporarily ignore the
distinction between mobile and immobile species. We thus define the
vectors 
\begin{equation*}
\xi = \begin{pmatrix} c \\ \bar{c} \end{pmatrix} \in \Rn[N_{\xi}], 
\quad   
\kappa = \begin{pmatrix} \log K \\ \log
  \bar{K} \end{pmatrix} \in \Rn[N_{\kappa}]
\quad 
\tau
= \begin{pmatrix} T \\ \bar{T} \end{pmatrix} \in
\Rn[(N_{\xi}-N_{\kappa})]
\end{equation*}
($N_{\xi} = N_s + \bar{N}_s$ is the total number of species and
$N_{\kappa} = N_r + \bar{N}_r$ is the total number of reactions), and
write the problem for the chemical equilibrium as
\begin{equation}
  \label{eq:chemeq}
  \begin{aligned}
    & S \log \xi = \kappa, \\
    & U \xi = \tau,
  \end{aligned}
\end{equation}
with the stoichiometric matrix $S \in \Rn[N_{\kappa} \times N_{\xi}]$,
and the kernel matrix $U \in \Rn[(N_{\kappa}-N_{\xi}) \times N_{\xi}]$ is such that $U
S^T=0$.

It has been shown by several
authors~\cite{compgeo:10b,Carrayrou2002,Hoffmann2009} that taking the
logarithms of the concentrations as new unknowns in~\eqref{eq:chemeq}
is beneficial from the numerical point of view, as it automatically
ensures that all concentrations will be positive (which might otherwise
be difficult to enforce), and it makes all the unknowns to be of
comparable size (in some cases, such as \MK{redox} reactions,
the concentrations have been seen to vary over several orders of
magnitude). We take the same convention to define the exponential of a
vector as for the logarithm: for a vector $z$ the
vector $\exp(z)$ is defined so that its $i$th entry is
$\exp(z_i)$. By defining $z = \log(\xi) \in \Rn[N_{\xi}]$,
system~\eqref{eq:chemeq} thus becomes
\begin{equation}
  \label{eq:chemeqlog}
  \begin{aligned}
    & S z = \kappa, \\
    & U \exp(z) = \tau,
  \end{aligned}
\end{equation}

In order to solve system~\eqref{eq:chemeqlog}, we take a cue from the
solution of constrained least squares problem
(see~\cite[chap. 5]{bjor:97}, and also~\cite{erhel:hal-01584490} in
the same context): we consider the first equation as a constraint, and
determine its general solution as the sum of a particular solution,
and an unknown of smaller dimension, that will be found by
substituting it into the second equation. Following the references above, we
first compute a $QR$
decomposition of $S^T$, as $Q^T S^T = \begin{pmatrix}  R \\
  0 \end{pmatrix}$,
with $Q= (Q_1, Q_2) \in \Rn[N_{\xi} \times N_{\xi}]$ orthogonal and
$R \in \Rn[N_{\kappa} \times N_{\kappa}]$ upper triangular (and
invertible, as $S$ is assumed to be of full rank).

The columns of $Q_1 \in \Rn[N_{\xi} \times N_{\kappa}]$ form an
orthonormal basis of $\text{range } S^T$, while those of $Q_2 \in
\Rn[N_{\xi} \times (N_{\xi}-N_{\kappa})]$ form an orthonormal basis of
$\text{Ker } S$. A first consequence is that one may choose $U=Q_2^T$
in the second equation of~\eqref{eq:chemeqlog}. 

Now, any $z \in \Rn[N_{\xi}]$ may be written uniquely as $z= Q_1 y_1 + Q_2
y_2$, with $y_1 \in \Rn[N_{\kappa}]$ and $y_2 \in
\Rn[N_{\xi}-N_{\kappa}]$. If we substitute this expression in the
first equation of~\eqref{eq:chemeqlog}, we obtain 
\begin{equation*}
  R^T y_1 = \kappa,
\end{equation*}
which determines $y_1$ uniquely. We set $b_1 = Q_1 R^{-T} \kappa$, so
that $z= b_1 + Q_2 y_2$ ,with $b_1$ known. 

We now use the expression for $z$ in the second equation
of~\eqref{eq:chemeqlog}, and we obtain a non-linear system of size
$N_{\xi} - N_{\kappa}$ for $y_2 \in \Rn[N_{\xi} - N_{\kappa}]$:
\begin{equation}
\label{eq:chemreduced}
H(y_2) \underset{\text{def}}{=} Q_2^T \exp(b_1 + Q_2 y_2) - \tau = 0. 
\end{equation}
This is the system that is to be solved numerically, and that is
analogous to the non-linear system for the principal species obtained
via the Morel formalism. The number of unknowns has been reduced from
the number of species to the number of species minus the number of
reactions, that is the number of principal species. 

As a side remark, let us notice that the Jacobian matrix for this
reduced chemical system has the form
\begin{equation*}
  J_c = Q_2^T \diag(c)\; Q_2, \quad c= b_1 + Q_2 y_2,
\end{equation*}
and this form is again the same as that obtained from the Morel
formulation, with the difference that here matrix $Q_2$ is
orthogonal. This may be important as the Jacobian matrices occuring in
the solution of the chemical equilibrium problem may be very ill
conditioned, as shown in~\cite{Machat2017}. Thus using orthogonal
matrices limits the ill-conditioning to the diagonal matrix $\diag(c)$
where it is harmless. 

To solve the reduced chemical problem~\eqref{eq:chemreduced}, we used a variant of Newton's
method. As is well known Newton's method does not always converge in practice and, especially
for a code that is designed to be used in coupling applications, it is
essential to ensure that the solver ``always'' works. We have found
that using a globalized version of Newton's method (using a line
search, cf.~\cite{Kelley95}) was effective in making the algorithm
converge from an arbitrary initial guess.

\medskip We now return to the context of solving the coupled problem,
where it is important to distinguish between mobile and immobile
species. Indeed, what is needed is the partition of the species
between their mobile and immobile forms, rather than the individual
concentrations (though they are still needed as intermediate
quantities). The totals can be computed a posteriori using their
definitions in equation~\eqref{eq:deftot}. 

It will be convenient to condense the chemical sub-problem by a
function
\begin{equation}
\label{eq:defpsic}
  \begin{aligned}
    \psi_C : \quad & \mathbf{R}^{Nc} \to \mathbf{R}^{Nc} \\
           & T \mapsto \psi_C(T) =\bar{C} =  U_{c\bar{c}}\, \bar{c}
  \end{aligned}
\end{equation}
where $\bar{c}$ is obtained by solving the chemical
problem~\eqref{eq:chemeqlog}, given $T$ (and $\bar{T}$, which we take as a
constant) for $\xi$, and computing $\bar{C}$ as indicated above.

\subsection{Transport model} 
\label{sec:convnum}
\MK{In this section we denote by $c$ a generic unknown concentration. When
the transport system is solved in the context of a coupled problem,
the role of $c$ will be played by $C$ as defined in
Section~\ref{sec:rtm}, cf. equation~\eqref{eq:coupled}.}
The transport of a single species through a 1D porous medium
$\Omega=]0,L[$ \MK{is governed by the advection--diffusion equation
  (cf.~\eqref{eq:massconv}):}
\MK{
\begin{equation}
\begin{array}{l}
\phi \partial_t c +\partial_x
 \left( -D \partial_x c +u c \right)= \phi \partial_t c + \partial_x j = \phi q, \quad 0 <
  x < L, \quad  0 < t < T_f,
\end{array}
\label{eq1}
\end{equation}
implicitly defining the flux $j$.}

The initial condition is $c(x,0)=c_0(x)$ and, in view of the
applications, the boundary conditions are a Dirichlet condition (given concentration)
$c(0,t)=c_d(t)$ at the left boundary  ($x=0$) and zero diffusive flux
$\dfrac{\partial c}{\partial x} = 0$ at the right
boundary ($x=L$). We could easily take into account more general
boundary conditions. 

\subsubsection{Discretization in space}
We treat the space and time discretization separately, as we will use
different time discretizations for the different parts of the
transport operator. 

For space discretization we use a cell-centered finite volume
scheme~\cite{eymagallherb:00}. 
The interval $]0,L[$ is divided into $N_h$ intervals
$[x_{i-\frac{1}{2}},x_{i+\frac{1}{2}}]$ of length $h_i$, where
$x_{\frac{1}{2}}=0,x_{N_h+\frac{1}{2}}=L$.
For $i=1,..,N_h$ we denote by $x_i$ the center and $x_{i+1/2}$ the extremity
of the element $i$.   We denote by $c_i$, $i=1,..,N_h$ the approximate concentration in cell $i$. 

\MK{We split the flux in equation~\eqref{eq1} 
as the sum of a diffusive flux $j_d=-D\dfrac{\partial
  c}{\partial x}$ and an advective flux $j_a=u c$.}
We then integrate equation~\eqref{eq1} over a cell $]x_{i-1/2},
x_{i+1/2}[$, to obtain
\begin{equation}
\label{eq:trdisc}
\phi_i h_i \dfrac{d c_i}{dt} + j_{d,i+\frac{1}{2}} +
j_{a,i+\frac{1}{2}} - j_{d,i-\frac{1}{2}} -
j_{a,i-\frac{1}{2}} = h_i \phi_i q_i, \qquad i=2, \dots, N_h. 
\end{equation}
Our flux approximations come from finite differences. For the diffusive
flux, we use harmonic averages for the diffusion coefficient (as used
in mixed finite element methods) :
\begin{equation}
\label{eq:fluxdisc}
j_{d,i+\frac{1}{2}}=-D_{i+\frac{1}{2}}\left(\frac{c_{i+1} -
    c_i}{h_{i+\frac{1}{2}}}\right)   
\end{equation}
with
$$
D_{i+\frac{1}{2}}=\frac{2D_iD_{i+1}}{D_i+D_{i+1}}, \quad
D_{\frac{1}{2}}=D_1 \quad D_{N_h+\frac{1}{2}}=D_{N_h}
 \quad \mbox{and}\quad  h_{i+\frac{1}{2}}=\frac{h_i+h_{i+1}}{2}
$$ 
For the advective flux, we use an upwind approximation, so that
(assuming for simplicity that $u>0$), $j_{a,i+\frac{1}{2}}=u c_i$

These approximations are corrected to take into account the boundary
conditions, both at $x=0$ and at $x=L$. We give a matrix formulation,
keeping time continuous for now. Since we will be using different
discretizations for the diffusive and advective parts (see next
section), we keep the matrices for advection and diffusion separate. 
With the notation:
\begin{equation*}
  \alpha_i = \dfrac{D_{i+\frac{1}{2}}}{h_{i+\frac{1}{2}}} +
  \dfrac{D_{i-\frac{1}{2}}}{h_{i-\frac{1}{2}}}, \; \beta_i =
\MK{-}  \dfrac{D_{i-\frac{1}{2}}}{h_{i-\frac{1}{2}}}, \; \gamma_i =
 \MK{-} \dfrac{D_{i+\frac{1}{2}}}{h_{i+\frac{1}{2}}}, \quad i=2,
  \ldots, N_h-1,
\end{equation*}
we define the matrices (with appropriate modifications for the
boundary terms)
\begin{equation*}
  A_d = \text{tridiag}(\beta_i, \alpha_i, \gamma_i), \quad A_a =
  \text{tridiag}(-u, u, 0)
\end{equation*}
as well as $M=\diag(\phi_i h_i)$.

The semi-discrete system can be written as
\begin{equation}
  \label{eq:semidisc}
  M \dfrac{dc}{dt} +(A_d + A_a) c = M q,
\end{equation}
with the initial condition $c_i = \frac{1}{h_i}\int_{x_{1-1/2}}^{x_{i+1/2}} c_0(x_i)$.

\subsubsection{Time discretization}

As the transport operator contains both advective and diffusive terms,
it makes sense to use different time discretization methods for the
different terms. Specifically, the diffusive terms should be treated
implicitly, and the advective terms are better handled
explicitly. Similarly to what was done above, we discretize the
interval $[0, T_f]$ with a time step $\Delta t$, which we take as a constant
for simplicity, and we denote by $c_i^n$ the (approximate) value of
$c_i(n \Delta t)$, and by $c^n$ the corresponding vector.

We compare several methods for discretization in time.

  \begin{description}
\item[Fully implicit]  both the diffusive and advective terms are
  treated implicitly: at each time step, we solve a linear system 
  \begin{equation*}
    (M+\Delta t (A_d + A_a)) c^{n+1} = M c^n + M \Delta t q^{\MK{n+1}}.    
  \end{equation*}
\item [Explicit advection and implicit diffusion] the diffusive terms are treated implicitly, 
and advective terms are handled explicitly under a CFL condition.
With this scheme, the system to be solved at each time step is
\begin{equation*}
  (M+ \Delta t A_d) c^{n+1} = (M - \Delta t A_a ) c^n + M \Delta t q^{\MK{n+1}}.
\end{equation*}
As it has an explicit part, the scheme just defined is stable under
the CFL condition $\phi_i u \Delta t \le \max_i h_i$. 

As this may be too
severe a restriction (some of our applications require integration
over a very large time interval), we use an operator splitting
scheme proposed by Siegel et al.~\cite{siegmosackjaff:97} (see
also~\cite{hotackmos:04}) that is both unconditionally stable, and
has a good behavior in advection dominated situations.
\item [Splitting] (Explicit advection and implicit diffusion), with
    sub-time steps : this scheme works by taking several small time
    steps of advection, controlled by 
CFL condition within a large time step of diffusion. Thus CFL impacts
advection, and larger time steps can be taken for diffusion.
 
More precisely, the time step $\Delta t$ will be used as the diffusion
time step, it is divided into $N_a$ time steps of advection
$\Delta t_a$ such that $\Delta t= N_a \Delta t_a$ where $N_a \geq 1$,
the advection time step will be controlled by CFL condition. Note that
taking a single advection step amounts to using the implicit--explicit
method seen previously. We solve equation~\eqref{eq1} over the time
step $[t^n, t^{n+1}]$ by first solving the advection equation
$\phi \partial_t c +\partial_x (u c)=0$ over $N_a$ steps of size
$\Delta t_a$ each, and then solve the diffusion equation
$\phi \partial_t c + \partial_x (-D \partial_x c)=\phi q$ starting
from the value at the end of the advection step.

\paragraph{\textbf{Advection step}}

Denote the intermediate times by $t^{n,m}, m=0, \dotsc, N_a$, with
$t^{n,0}=t^{n},\,t^{n,N_a}=t^{n+1}$. Each interval $[t^{n},t^{n+1}]$
is then divided into $N_a$ intervals 
$[t^{n,m},t^{n,m+1}], m=0,...N_a-1$.  Let $c^{n,m}$ be the
approximate concentration $c$ at time $t^{n,m}$ and $c^{n,0}=c^{n}$. We
discretize the advection equation in time, using the explicit Euler
method, we obtain
\begin{equation}
\label{eq:advsplit}
  M c^{n, m+1} = (M \MK{-} \Delta t_a A_a) c^{n,m}, \quad m=0, \dots, N_a-1.
\end{equation}

\paragraph{\textbf{Diffusion step}}
The diffusion part is discretized by an implicit Euler scheme,
starting from $c^{n, N_a}$ :
\begin{equation}
  (M + \Delta t A_d) c^{n+1} = M c^{n, N_a} + M \Delta t q^{\MK{n+1}}.
\end{equation}

\end{description}

For further use, we note that all three discretizations methods can be
written in a similar way, as
\begin{equation}
  \label{eq:transdisc}
  A c^{n+1} = B c^n + \MK{M} \Delta t q^{\MK{n+1}},
\end{equation}
where the matrices $A$ and $b$ are defined in each case as: 
  \begin{description}
  \item[Fully implicit] $A = M + \Delta t (A_d+A_a)$ and  $B=M$, 
  \item[Explicit--implicit]  $A = M + \Delta t A_d$ and  $B =M -\Delta
    t A_a $,
  \item[Splitting] $A = M + \Delta t A_d$ and $B$ is defined implicitly
  by~\eqref{eq:advsplit}
  \end{description}

Alternatively, we may want to follow the pattern begun in
section~\ref{sec:chemeq}, and abstract one transport step as a mapping
from $c^n$ to $c^{n+1}$ under the action of the source $q^{\MK{n+1}}$. We
define
\begin{equation}
  \label{eq:tranpsol}
 \begin{aligned}
\psi_T:  \quad
   & \Rn[N_h] \times \Rn[N_h]  \to \Rn[N_h] \\
   & (c,q) \mapsto \psi_T(c,q ) = A^{-1} (B c + M \Delta t q)
  \end{aligned}
\end{equation}

\section{Methods for solving the coupled system}
\label{sec:coupled}

In this section, we discuss methods for solving the coupled
transport--chemistry system. It may be difficult to compute
and store the Jacobian matrix (it may be very large, and contains a
block that is the inverse of the chemical solution operator, that may
be difficult to compute). Consequently we advocate a Newton--Krylov approach, as
this requires only the multiplication of the Jacobian matrix by a
given vector, the Jacobian matrix itself does not have to be
stored. By exploiting the block structure of the Jacobian matrix, the
matrix--vector product can be computed in terms of the individual
Jacobians.

The efficiency of the Newton--Krylov method rests on the choice of an
adequate preconditioner (see~\cite{knollkeyes04} for several
illustrations). In this paper, we show that block preconditioners for
the Jacobian can be related to physics--based approximations, and also
that a block Gauss--Seidel preconditioner is closely related to an
elimination method at the non-linear level.

\subsection{Formulation of the coupled system}
\label{sec:formul}

We start with the coupled system that was obtained at the end of
section~\ref{sec:rtm}.  It consists of the (transformed) transport
equations~\eqref{eq:coupled}, together with the mass action
laws~\eqref{eq:massaction}. They are linked by the definition of the
transformed variables $T, C$ and $\bar{C}$ in
equations~\eqref{eq:deftot} and \eqref{eq:deftotal}.

Because chemistry is local, we can eliminate the individual
concentrations at each point by using the ``chemical
solution'' operator $\psi_C$ defined in
equation~\eqref{eq:defpsic}. This only leaves $C$, $\bar{C}$, $T$ and
$\bar{T}$ as unknowns that are solution of the following system
\begin{equation}
  \label{eq:coupledbis}
  \begin{aligned}
  & \phi \partial_t C + \phi \partial_t \bar{C} + \Lop C &=&\; 0, \\
  & \phi \partial_t \bar{T} &=& \; 0, \\
  & T &=&\; C +\bar{C}, \\
  & \bar{C} &=& \;\psi_C(T).
  \end{aligned}
\end{equation}

As is clear from the second equation, $\bar{T}$ is constant in time
(we will see that it is simply the cationic exchange capacity in the
example below). To simplify the notation, the equation
for $\bar{T}$ will be omitted in the sequel, and it has been dropped
from the definition of $\psi_C$.

The next step is to discretize the transport equations in space and
time, using any of the methods that were discussed in
section~\ref{sec:convnum}. For ease of notation, we denote the
discrete unknowns (vectors of size $N_h$) by the same letter in bold
typeface as their continuous counterpart. Moreover, all quantities at
time $t^n$ will be indicated by a superscript $n$.

We make use of a notational device, inspired from Matlab, that was
introduced in~\cite{compgeo:10b} to take into account the dependence
of the unknowns both on the grid cell index and the chemical species
index. For a block vector $u_{ij}$, where $i \in [1, N_c]$ represents the
chemical index and $j \in [1, N_h]$ represents the spatial index, we
 denote by
\begin{itemize}
\item $u_{:,j}$ the column vector of concentrations of all chemical species in grid cell $j$.  
\item $u_{i,:}$ the row vector of concentrations of species $i$ at all
  grid points;
\end{itemize}
The unknowns are thus numbered first by chemical species, then by grid
points, so that all the unknowns for a single grid point are numbered
contiguously. As proposed by Erhel et al.~\cite{ehrsabdieul:13}, we
make use of the Kronecker product and the $\vecop$ notation (see for instance ~\cite{golubvanloan4:13}).
Given $V = \begin{pmatrix}  V_1, V_2, \dotsc, V_{N_c}   \end{pmatrix}
\in \Rn[N_x \times N_c]$ ($V_i$ denotes the $i$th column of $V$) we denote by $\vecop(V)$ the vector in
$\Rn[N_x Nc]$ such that
\begin{equation*}
  \vecop(V) =  \begin{pmatrix}  V_1 \\ V_2 \\ \vdots, \\ V_{N_c}   \end{pmatrix}.
\end{equation*}

We also
extend the solution operator $\psi_C$ (defined in \eqref{eq:defpsic}
to operate on the global vector:
\begin{equation}
  \begin{aligned} \Psi_C: \quad & \Rn[N_c \times N_h] \mapsto
\Rn[N_c \times N_h] \\ &\bT \to \Psi_C(\bT), \quad \text{with }
\Psi_C(\bT)_{:,j} &= \psi_C(\bT_{:,j}), \; \text{for } j =1, \dotsc, N_h
  \end{aligned}
\end{equation}
and define the right hand side vector by $\mathbf{b}^n = B \bC^n + M \bCbar^n$.

With these conventions, the discretized version of the coupled system~\eqref{eq:coupledbis} becomes
\begin{equation}
  \label{eq:coupleddisc}
  \begin{aligned}
    & A {(\bC_{i,:}^{n+1})}^T + M {(\mathbf{\bar{C}}_{i,:}^{n+1})}^T -{(\mathbf{b}_{i,:}^n)}^T = 0, & i=1, \dotsc, N_c, &\\
  &\bT_{ij}^{n+1} - \bC_{ij}^{n+1} - \bCbar_{ij}^{n+1} = 0, & i=1, \dotsc, N_c, \;
     & j=1, \dots, N_h,\\
  &\bCbar_{:,j}^{n+1} - \Psi_C(\bT_{:,j}^{n+1}) = 0, & & j=1, \dotsc, N_h. 
  \end{aligned}
\end{equation}
This is a non-linear system of equations to be solved at
each time step for the three unknowns $(\bC^{n+1},
\bT^{n+1},\bCbar^{n+1}) \in \Rn[3N_c N_h]$, defined by the function  $f: \Rn[3N_c N_h] \mapsto
\Rn[3N_c N_h]$, such that
\begin{equation}
  \label{eq:defnf}
  f   \begin{pmatrix} \bC \\ \bT \\  \bCbar \end{pmatrix} =
 \begin{pmatrix}
    (A \otimes I) \bC + (M \otimes I) \bCbar - \mathbf{b}^n \\
      \bT - \bC - \bCbar \\
      \bCbar - \Psi_C(\bT)    
  \end{pmatrix} = 0
\end{equation}

\subsection{Solution of the coupled system by Sequential Iterative Approach}
\label{sec:sia}

The most classical method for solving the coupled
problem~\eqref{eq:defnf} is the Sequential Iterative Approach (SIA) that
consists of separately solving the transport equations and the
chemical equations
cf.~\cite{carrmosbeh:04,lagneau:10,saalayorcarr98,yehtrip}. 

It will be convenient to extend the solution operator for transport
$\psi_T$ (defined in~\eqref{eq:tranpsol}) to operate on global vectors 
\begin{equation}
  \begin{aligned} \Psi_T: \quad & \Rn[N_c \times N_h] \times \Rn[N_c \times N_h] \mapsto
\Rn[N_c \times N_h] \\ &(\bC, \bQ) \to \Psi_T(\bC, \bQ), \quad \text{with }
\Psi_T(\bC, \bQ)_{i,:} &= \psi_T(\bC_{i,:}, \bQ_{i,:}), \; \text{for } i =1, \dotsc, N_c.
  \end{aligned}
\end{equation}
The solution operators $\Psi_C$ and $\Psi_T$ emphasize that both
the transport and chemistry steps can be seen as black boxes. This
is the basis for the approach followed
in~\cite{jaradreuzycoche:compgeo17}, where a flexible code that allow
switching the transport and chemistry components is presented. We show in the next
section that this black box approach is not restricted to the SIA.

At each time step, we iterate on the transport and chemistry
problem. More precisely, for each iteration $k$, we first
 solve the transport equations: 
\begin{equation}
  \bC^{n+1,k+1} = \Psi_T\left(\bC^n, M \dfrac{(\bCbar^{n+1,k} -
      \bCbar^n)}{\Delta t}\right) = -({A}^{-1} \otimes I)
\left( (M \otimes I) \bar{\bC}^{n+1, k} - \mathbf{b}^n \right)
\end{equation}
for $\bC^{n+1,k+1}$ (there is one transport equation for each
species). The new mobile total concentrations is  added
to the immobile concentrations $\bCbar^{n+1,k}$ (from the previous
iteration) to obtain $\bT^{n+1,k+1}$, and this last
concentration provides the data for the local chemical
problems
\begin{equation}\label{ch}\bCbar^{n+1,k+1}=\Psi_C(\bT^{n+1,k+1}). \end{equation}
The iterations are stopped when the difference between the solutions in the
iterations $k$ and $k+1$ is small enough, relative to a specified tolerance
 \begin{equation}\label{co.12}
\dfrac{\parallel \bC^{n+1,k+1}-\bC^{n+1,k}\parallel}{\parallel
  \bC^{n+1,k+1}\parallel}+\dfrac{\parallel \bCbar^{n+1,k+1}-\bCbar^{n+1,k}\parallel}{\parallel
  \bCbar^{n+1,k+1}\parallel}<\epsilon
\end{equation}

\MK{It is known that} in order both to control the errors due to the splitting, and to
ensure convergence,  this method requires a small time step~\cite{McQuarrieMayer:ESR05,sacaay:00,WRCR:WRCR5757}.

\subsection{Solution of the non-linear system by a Newton-Krylov
  method}

In the geochemical literature, the SIA method is known as an operator splitting approach, but it
is more properly a block Gauss--Seidel method on the coupled
system. This suggests that other, potentially more efficient, methods
could be used to solve system~\eqref{eq:defnf}. A natural candidate is
Newton's method. Since the system is large, and also because it
involves implicitly defined functions, we turn to the Newton-Krylov
variant. 

Recall that at each step of the ``pure'' form of Newton's method
for solving $f(Z)=0$, one should compute the Jacobian matrix $J
=f'(Z^k)$, solve the linear system
\begin{equation}
  \label{eq:newtres}
  J\, \delta Z = - f(Z^k),
\end{equation}
usually by Gaussian elimination, and then set $Z^{k+1} = Z^k + \delta Z$. In practice, one should use
some form of globalization procedure in order to ensure convergence
from an arbitrary starting point. If a line search is used, the last
step should be replaced by $Z^{k+1} = \delta Z + \lambda Z^k$, where
$\lambda$ is determined by the line search procedure~\cite{Kelley95}. 

The Newton--Krylov method (see~\cite{Kelley95,knollkeyes04}
and~\cite{hammvallicht05}, to which our work is closely related) is a
variant of Newton's method where the linear system~\eqref{eq:newtres} that arises at each
step of Newton's method is solved by an \emph{iterative} method (of
Krylov type), for instance GMRES~\cite{sasc:86}. 
As the linear system is not solved exactly, the convergence theory for
Newton's method does not apply directly. However, the theory has been
extended to the class of Inexact Newton methods, of which the
Newton--Krylov methods are representatives. The theory leads to a practical
consequence, by giving specific strategies for the forcing term, that
is the tolerance within which the linear system has to be solved (see~\cite{Kelley95} or the
short discussion in~\cite{compgeo:10b}).

The main advantage of
this type of method is that the full (potentially very large) Jacobian
is not needed, one just needs to be able to compute the product of the
Jacobian with a vector. As this Jacobian matrix vector product is a
directional derivative, this leads to Jacobian free methods, where
this product is approximated by finite differences. However, for the
system considered here, this has several drawbacks: in addition to its
inherent inaccuracy, computing the derivative by finite difference
entails solving one more chemistry problem. It was shown in~\cite{compgeo:10b} how this
computation could be performed exactly. This is both cheaper and more
accurate. Moreover, the above reference also shows how the natural
block structure present in the coupled system~\eqref{eq:coupleddisc} 
can be exploited to efficiently compute the residual. This is what is
done in the present work, and we now give some details.

To compute the Jacobian matrix times vector product, start from
equation~\eqref{eq:defnf}. The Jacobian matrix of $f$ has the
following block form
\begin{equation}
  \label{eq:jacf}
J_f = 
\begin{pmatrix}
  A \otimes I  & 0 & M \otimes I \\
  -I & I & -I\\
  0 & - J_C & I 
\end{pmatrix},
\end{equation}
where $J_C$ is a block diagonal matrix, whose $j$th block is
$\psi_C'(T_{:j})$, for $j=1, \dotsc, N_h$,  
and the action of the Jacobian on a vector $ v = \left(v_C^T,  v_T^T, v_{\bar{C}}^T  \right)^T$
can be computed as
\begin{equation}
\label{eq:jacvec}
  J_f \begin{pmatrix} v_C \\ v_T \\ v_{\bar{C}} \\ \end{pmatrix} = 
  \begin{pmatrix}
    (A \otimes I) v_C + (M \otimes I) v_{\bar{C}} \\[1.2ex] 
    -v_C + v_T - v_{\bar{C}} \\[1.2ex]
    v_{\bar{C}} - J_C  v_T 
  \end{pmatrix}.
\end{equation}
Of course, the Kronecker product is just a notational device, and the
matrices $A \otimes I$ or $ M \otimes I)$ are never actually
formed. Instead, we use a well know property of the Kronecker product
(see~\cite{golubvanloan4:13}) : 
\begin{equation}
\label{eq:kronprod}
(A \otimes I) \vecop(V)=  \vecop(V A^T ),
\end{equation}
and this  just requires to multiply the matrix $A$ by the
concentration vector for each species.

Since the Krylov solvers can stagnate, resulting in slow convergence,
possible strategies for preconditioning the linear system  will
be investigated in the next section. 

\section{Non linear and linear preconditioning}
\label{sec:precond}

Since the Jacobian matrix is not explicitly computed (it will just be
used as a theoretical device in what follows), the only
available options for preconditioning the system are those that
respect the block structure of the matrix. We introduce two
preconditioners derived from classical block-iterative methods, and we
show that these methods have strong links to the SIA method from
section~\ref{sec:sia}, and also to a non-linear elimination method, to
be introduced in section~\ref{nonlinearprec}.

\subsection{Block preconditioning for the coupled system}
\label{sec:linearprec}

Using a preconditioner means that instead of solving, at each Newton
iteration, the system
\begin{equation}
  \label{eq:jacob}
J_f \delta Z = -f(Z^k)  
\end{equation}
one solves one of the (mathematically equivalent) systems:
\begin{description}
\item[right preconditioning] $ J_f P^{-1} \delta y = -f(Z^k)$, and
  then $\delta Z^k= P ^{-1} \delta y$, 
\item[left preconditioning] $P^{-1} J_f \delta Z = -P^{-1} f(Z^k)$. 
\end{description}
The matrix $P$ is called a preconditioner, and it should be chosen so as to
fulfill the often conflicting goals :
\begin{itemize}
\item $P$ should be close to $J_f$ so that GMRES converges faster for
  the preconditioned system than for the original one,
\item $P$ is ``easy'' to invert, so that each iteration is not much
  more expensive than an iteration for $J_f$ alone.  
\end{itemize}

As already mentioned, in the context of the coupled
problem~\eqref{eq:defnf}, the entries of the Jacobian matrix $J_f$ are not known
explicitly, so that methods that depend on the entries of the matrix
(such as incomplete factorization methods) cannot be used. On the
other hand, we know the block structure of $J_f$
(see~\eqref{eq:jacf}), and we can invert the upper left diagonal
block (\MK{of course, when we write ``invert''  we really mean ``solve a
  linear system with''. We never actually form the inverse. See also
  remark~\ref{rem:inv1d} below}). These two facts can be exploited by resorting to block
preconditioners that respect the bock structure of the Jacobian.
Specifically, we investigate block methods derived from classical
iterative methods, namely Jacobi and Gauss--Seidel. 

\begin{description}
\item[Block Jacobi preconditioning] The preconditioning matrix ${\bf
    P}$ for  block Jacobi is:
$${\bf P_{BJ}}=
\begin{pmatrix}
A \otimes I & 0& 0\\
0 & I & 0\\
 0 & 0 & I 
\end{pmatrix},
$$
so that the action of the left-preconditioned matrix $\bf P_{BJ} J_f$ on a vector $v
=(v_C^T, v_T^T, v_{\bar{C}}^T )^T$ is :
\begin{equation}
  \label{prod:defPJv}
{\bf P_{BJ}^{-1} J_f v}=
\begin{pmatrix}
v_C + ((A^{-1} M) \otimes I) v_{\bar{C}}\\
 v_C + v_T -v_{\bar{C}}\\
- J_C v_T 
\end{pmatrix},
\end{equation}

\item[Block Gauss---Seidel preconditioning]
Here, the preconditioning matrix ${\bf P_{BGS}}$ for  block Gauss---Seidel and its
inverse are :
\begin{equation*}
{\bf P_{BGS}}=
\begin{pmatrix}
A \otimes I& 0& 0\\
 -I & I & 0\\
 0 & - J_C & I 
\end{pmatrix},
\qquad
{\bf P_{BGS}^{-1}}=
\begin{pmatrix}
A^{-1} \otimes I & 0& 0\\
A^{-1} \otimes I & I & 0\\
J_C (A^{-1} \otimes I) & J_C & I 
\end{pmatrix},
\end{equation*}
so that the action of the left-preconditioned matrix ${\bf P_{BGS} J_f}$  on a vector $v
=(v_C^T, v_T^T, v_{\bar{C}}^T)^T$ is :
\begin{equation}
  \label{prod:defPGSv}
{\bf P_{BGS}^{-1} J_f v}=
\begin{pmatrix}
v_C + \phantom{J_c v_{\bar{C}}-J_C(}  \left((A^{-1}M) \otimes I \right) v_{\bar{C}}\\
\MK{v_T - v_{\bar{C}}\phantom{J_C}} + \phantom{J_C } \left( (A^{-1}M) \otimes I \right) v_{\bar{C}}\\
v_{\bar{C}} -  J_C v_{\bar{C}} + J_C \left( (A^{-1}M) \otimes I \right) v_{\bar{C}}.
\end{pmatrix},
\end{equation}

\end{description}

Here again, as in~\eqref{eq:jacvec}, neither $A^{-1}$ nor the
Kronecker products are computed. Rather, to compute $w = \left((A^{-1}M)
  \otimes I \right) v_{\bar{C}}$, for a given vector $v_{\bar{C}} \in
\Rn[N_c N_h]$, one defines $W \in \Rn[N_x \times N_c]$ as the solution
of
\begin{equation}
  \label{eq:awmvc}
A W = M V_{\bar{C}}^T  
\end{equation}
where $v_{\bar{C}} =\vecop(V_{\bar{C}})$, then let $w=\vecop(W)$.

This means that the action of the preconditioner can be computed by
solving a transport step for each chemical species, and a
multiplication by the Jacobian of the chemical operator (which in
turns requires solving a linearized chemical problem for each grid
cell).  These are also the building blocks for the SIA formulation,
and this shows that the fully coupled approach can be implemented at
roughly the same cost per iteration as the SIA approach.

\begin{remark}
\label{rem:inv1d}
In both cases, notice that the preconditioning step involves the inverse of the
transport block. In our 1D case, this is not a difficulty, as this
block can be easily inverted. When we move to 2D or 3D problems, this
step would become more problematic. However, the preconditioners could
then be defined by replacing the matrix $A$ in the definition of $J_f$
by the application of a (spectrally equivalent) matrix, such as
several iterations of a multigrid solver.  A multigrid method such as
that proposed recently in~\cite{2017arXiv170405001S} would be
particularly appropriate. 
\end{remark}

\begin{remark}
In both cases (exact or inexact transport solver), one should expect
mesh independent convergence, as the operator being solved by GMRES
becomes bounded independently of the mesh. This expectation will be confirmed
by the numerical results in section~\ref{sec:numprec}. 
\end{remark}

\subsection{Elimination of unknown as non-linear preconditioning}
\label{nonlinearprec}

In this section, we consider alternative solution strategies, based on
eliminating some of the unknowns from the system~\eqref{eq:defnf}. We group these strategies under the
heading of ``non-linear preconditioning'' because the elimination is
done before the linearization, but it must be noted that we take
advantage of the fact that the first (block) equation
of~\eqref{eq:defnf} is linear and can be solved species by species. In
this context, this strategy should be reminiscent of the ``non-linear
preconditioning'' as introduced by Cai and Keyes~\cite{caikeyes:02},
where block of unknowns are eliminated locally.\MK{ It also provides
   an extension to multi-component transport of previous work by Kern
   and Taakili~\cite{kerntaak:10} that deals with preconditioning a
   model with one species undergoing sorption.}

We first eliminate $\bT$ from the original system~\eqref{eq:defnf}, leading to a
system with only $\bC$ and $\bCbar$ as unknowns
$$
g   \begin{pmatrix} \bC \\  \bCbar \end{pmatrix} = 0
$$
with
\begin{equation}
  \label{eq:defng}
    g   \begin{pmatrix} \bC \\  \bCbar \end{pmatrix} =
 \begin{pmatrix}
    (A \otimes I) \bC + (M \otimes I) \bCbar - \mathbf{b}^n \\
      \bCbar - \Psi_C(\bC + \bCbar)    
  \end{pmatrix}.
\end{equation}

The Jacobian of the new system is
$$J_g = 
\begin{pmatrix}
  A \otimes I & M \otimes I \\
  -J_C & I - J_C 
\end{pmatrix}.
$$
and the Jacobian by  vector product is :
\begin{equation}
  \label{prod:defjgv}
J_g \begin{pmatrix}
v_C \\
v_{\bar{C}}
\end{pmatrix}= 
\begin{pmatrix}
  (A \otimes I) v_C + (M \otimes I) v_{\bar{C}} \\
  - J_C (v_C+v_{\bar{C}}) + v_{\bar{C}}
\end{pmatrix},
\end{equation}

\medskip
One can even go one step further, by eliminating both the unknowns $\bT$ 
and $\bC$, to obtain a system with $\bCbar$ as the single unknown: 
\begin{equation}
\label{eq:hsystem}
h(\bCbar) =  \bCbar -\Psi_C \biggl( \Psi_T\left( \bC^n, \MK{M} \dfrac{\bCbar -
    \bCbar^n}{\Delta t} \right) +\bCbar  \biggr) =0.
\end{equation}
This equation is presented in fixed point form,  and solving it by
fixed point method recovers the SIA method described in
Section~\ref{sec:sia}. But equation~\eqref{eq:hsystem} can also be solved by Newton's
method. We presented a Jacobian-free Newton-Krylov method
in~\cite{compgeo:10b}. Its main advantage is to require that one be
able to evaluate the nonlinear residual function $h$, and the Jacobian
times vector product
is approximated by finite difference. Since it involves solving an additional
chemical problem, this step is very expensive. 
 In this work, we use the exact Jacobian of $h$, which requires that
 we ``open the black box'', to look in more detail at the structure
 of the Jacobian. To do this, we rewrite the equation above by making
 use of the precise definition of the transport operator $\Psi_T$: 
\begin{equation}
  \label{eq:defh} 
h(\bCbar) = \bCbar -\Psi_C \biggl( (A^{-1} \otimes I) \left(\mathbf{b}^n
  -(M \otimes I) \bCbar \right)+ \bCbar \biggr)  = 0.  
\end{equation}
If we denote by $J_T = I- \left((A^{-1} M) \otimes I \right)$ the
Jacobian matrix of $\Psi_T$, we easily see
that the Jacobian by vector product for $h$ is:
\begin{equation}
  \label{prod:defjhv}
J_hv= v - J_C J_T v = v - J_C v \MK{+} J_C \left( (A^{-1} M) \otimes I \right) v.
\end{equation}
Both the alternative formulation and its Jacobian involve the
resolution of transport at each Newton step. One again expects that
convergence \MK{of the linear solver will become} independent of the mesh
size~\cite{kerntaak:10}. This will be confirmed in
Section~\ref{sec:numres}.

\subsection{Links between block preconditioning and elimination}

The main advantage of the SIA approach is that it allows the reuse of
existing software modules for solving transport and chemistry. This is
an important practical issue, as these modules may have been developed
independently, even by different groups. This advantage is offset by
the possibly slower convergence when compared to GIA, see for
example~\cite{hammvallicht05}). The ``$h$'' method proposed in
section~\ref{nonlinearprec} (keeping only $\bar{\bC}$ as unknown)
\MK{belongs to the GIA family, in the sense that it solves both
  chemistry and transport as a coupled system}. This method is in the
same spirit as the one proposed by Knabner and his
group~\cite{Hoffmann2009,knabner05,knabner07}. The reduction method
proposed in these papers is more general than the formulation in
section~\ref{sec:formul}, but their ``resolution function'' is
identical to the chemical solution operator $\Psi_C$. 

However, the method aims at keeping some of the advantages of SIA: it
allows to keep transport and chemistry as separate
modules. Equation~\eqref{eq:defh} shows that the evaluation of the
residual when solving the system with $h$ requires solving a transport
step for each species, and then solving a local chemical equilibrium
system at each grid cell. \MK{It was shown
  in~\cite{compgeo:10b,knollkeyes04} that this is sufficient, provided
  one accepts to approximate the Jacobian matrix by vector product by
  a finite difference quotient. However, the present paper makes the
  point that it is both cheaper and more accurate to compute this
  matrix-vector product exactly. This requires more cooperation from
  the chemical solver, as one needs to access its Jacobian
  computation};

\MK{We also give some indications for comparing the cost of the
  ``$h$'' method with SIA. At each iteration of SIA, one has to solve
  one transport problem per component, and one chemical equilibrium
  problem per grid cell. For the ``$h$'' method, the same cost is
  incurred at each Newton iteration, and one has to add the cost of
  the Jacobian matrix vector product at each GMRES iteration. This
  translates to one additional transport problem for each component,
  and one \emph{linearized} chemical problem for each cell. We neglect
  the overhead of constructing the Arnoldi basis in GMRES, as the
  number of iterations is expected to be small (as confirmed by the
  results in section~\ref{sec:numprec}). So the number of transport
  problem to be solved is the sum of the number of Newton and GMRES
  iterations, but the number of chemical equilibrium problems is only
  the number of Newton iterations. One can hope (and again this is
  confirmed by our numerical experiments) that the number of Newton
  iterations will remain small. Additionally, it has been observed
  that the SIA method needs a small time step to reduce the splitting
  errors (see~\cite{Momascompar,lagneau:10}). 

It is more difficult to comment on the cost of GIA, as this will
depend in a crucial way on the efficiency of the solver. With proper
care and effort, DSA methods can be made very efficient~\cite{hammond:14}. The
results in~\cite{Hoffmann2009} also show that a method based on a
structure similar to that of the ``$h$'' method was among the fastest
on the MoMaS benchmark.
}

\medskip
It is interesting to note that both the SIA method and
the ``$h$'' function method from section~\ref{nonlinearprec} can be
interpreted in terms of the block preconditioners introduced in
section~\ref{sec:linearprec}. 

Indeed, the elimination method can be interpreted as a (linear) change
of variables, given by 
\begin{equation}
  \label{eq:trans2}
  \left\{ 
    \begin{aligned}
      \tilde{\bC} &= \bC & &+ \hphantom{I-}  \left((A^{-1} M)\otimes I  \right)\; \bar{\bC} \\
      \tilde{\bT} &= & \bT &- \left( (I - A^{-1} M)\otimes I\right)  \bar{\bC}
      \\
      \tilde{\bCbar} &= & & \hphantom{- \left( (I - A^{-1} M)\otimes
          I\right)\; } \bar{\bC}
    \end{aligned}
\right. ,
\end{equation}
whose matrix
\begin{equation*}
\quad B = 
\begin{pmatrix}
      I & 0 & \hphantom{-I-\;} (A^{-1} M) \otimes I   \\
      0 & I &  - ( I - A^{-1}M) \otimes I    \\
      0 & 0 & \hphantom{- ( I - A^{-1}M) \otimes} I
\end{pmatrix}
\end{equation*}
 is block triangular, so that the transformation is easily inverted. 
The transformed system
\begin{equation*}
\tilde{f}\begin{pmatrix} \tilde{\bC} \\ \tilde{\bT} \\  \tilde{\bCbar} \end{pmatrix}  = f \begin{pmatrix} \bC \\ \bT \\  \bCbar \end{pmatrix}
\end{equation*}
 takes the simple form
\begin{equation}
\label{eq:systrans}
\tilde{f} \begin{pmatrix} \tilde{\bC} \\ \tilde{\bT}
  \\  \tilde{\bCbar} \end{pmatrix} =
\begin{pmatrix}
(  A \otimes I) \tilde{\bC} - \mathbf{b} \\ 
-\tilde{\bC} + \tilde{\bT}  \\
\tilde{\bCbar} - \Psi_C \biggl( \tilde{\bT} + \left( (I - A^{-1} M) \otimes I
  \right) \tilde{\bCbar} \biggr)  
\end{pmatrix}=0.
\end{equation}
and the last equation is obviously identical to~\eqref{eq:defh}, after
having eliminated the  first two unknowns.

Because the transformation used is linear, and because Newton's method
is invariant under a linear transformation, the iterates between the
original and the transformed system will be related by  the same
transformation (provided the initial guesses are related similarly). Note that this
will \emph{not} necessarily be true for an inexact Newton's method, such as the
Newton-Krylov method used in this work. However,
Deuflhard points out~\cite[sec. 2.2.4]{deuflhard:11} that because the
Newton residual is invariant under affine
transformations, GMRES is the natural choice for
solving the linear system arising at each Newton iteration. Moreover,  left
preconditioning can be used with GMRES provided the preconditioning
matrix 
\begin{itemize}
\item is kept constant throughout the Newton iterations, 
\item and is incorporated in the convergence monitoring criteria.
\end{itemize}

Note that the Jacobian of the transformed system is the block
triangular matrix
\begin{equation}
  \label{eq:jacobg}
  J_{\tilde{f}} =
  \begin{pmatrix}
    A \otimes I & 0 & 0 \\
    -I & I & 0 \\
    0 & -J_{\MK{C}} & \MK{J_h}
  \end{pmatrix}
\end{equation}
\MK{where $J_h = I -J_{\MK{C}} \left((I- A^{-1} M ) \otimes I \right)$,
defined in~\eqref{prod:defjhv},} is actually 
 the Schur complement of $J_f$ with respect to
its first two variables.

This confirms the close  link between the non-linear
elimination method from section~\ref{nonlinearprec} and block
Gauss--Seidel preconditioning in~\eqref{prod:defPGSv}: as noted above, the
Jacobian of $h$ is exactly  the Schur complement of the
Jacobian of $f$. In both cases, one needs to compute $ (J_C  ((A^{-1}M)
\otimes I)) v$. Of course, we are once again taking advantage of the fact that one
of the operators is linear ! 

An interesting consequence of the change of variables is that, because
$J_{\tilde f}$ and the change of variable matrix $B$ are both block
triangular, differentiating the identity
$f = \tilde f \circ B$ leads to a block triangular factorization of
$J_f$,
\begin{equation}
  J_f = J_{\tilde f} B = 
  \begin{pmatrix}
    A \otimes I & 0 & 0 \\
    -I & I & 0 \\
    0 & -J_{\MK{C}} & S
  \end{pmatrix}
\begin{pmatrix}
      I & 0 & \hphantom{-I-\;} (A^{-1} M) \otimes I   \\
      0 & I &  - ( I - A^{-1} M) \otimes I    \\
      0 & 0 & \hphantom{- ( I - A^{-1} M) \otimes} I
\end{pmatrix}.
\end{equation}
This factorization could be used as a basis for constructing efficient
preconditioners for $J_f$, much as in the spirit
of~\cite{doi:10.1137/120883086},~\cite{doi:10.1137/S1064827500377435} (see also
~\cite{doi:10.1137/S1064827599355153}).
The matrix $B$ is upper
triangular with unit diagonal, so that all its eigenvalues are
equal to~1. As noted in~\cite{White2011}, under these conditions, Krylov subspace methods for the
preconditioned system $J_{\tilde{f}}^{-1} J_f$ converge in 1 iteration, giving
$J_{\tilde f}$ as a perfect preconditioner. 

Because it contains $S$ as its $(3,3)$ block, $J_{\tilde f}$ cannot be formed,
let alone inverted. A first solution is to replace $S$ by an
approximation $\tilde{S}$ that is easier to invert. The simplest
choice is to take $\tilde{S}=I$, which gives the block Gauss--Seidel
preconditioner from section~\ref{sec:linearprec}. 
But actually, systems with $S$ can be solved by an iterative method,
as the matrix vector product $S v$ can be computed by proceeding as
for the Gauss--Seidel preconditioner at the end of
section~\ref{sec:linearprec} (see equation~\eqref{eq:awmvc}).

The numerical results in section~\ref{sec:numprec}
(cf. figure~\ref{fig:4}) will confirm that the performance of 
SIA and block Jacobi on the one hand 
and the elimination method and block
Gauss--Seidel on the other hand
are very similar.

\section{Numerical results}
\label{sec:numres}
\subsection{MoMaS Benchmark : 1D easy advective case}
The MoMaS Benchmark has been designed to compare numerical methods for reactive transport models in 
1D and 2D. 
Different methods for coupling have been used to solve this
benchmark. The definition has been published in~\cite{compgeo:10a} 
and the results of participants are compared in the synthesis
article~\cite{Momascompar}. 

The geometry of the test case is shown in Figure~\ref{fig:1}. For the
1D test case, the domain is heterogeneous and composed of two porous media
A and B. Medium A is highly permeable with low porosity and low reactivity in
comparison with medium B. Their physical properties are given in
Table~\ref{tab:propmat}. The \MK{Darcy} velocity is constant over the
domain and is equal to $\phi u = 5.5 \, 10^{-3} \text{L}\text{T}^{-1}$. 

\begin{figure}[ht]
\centerline{\includegraphics[width=0.6\textwidth]{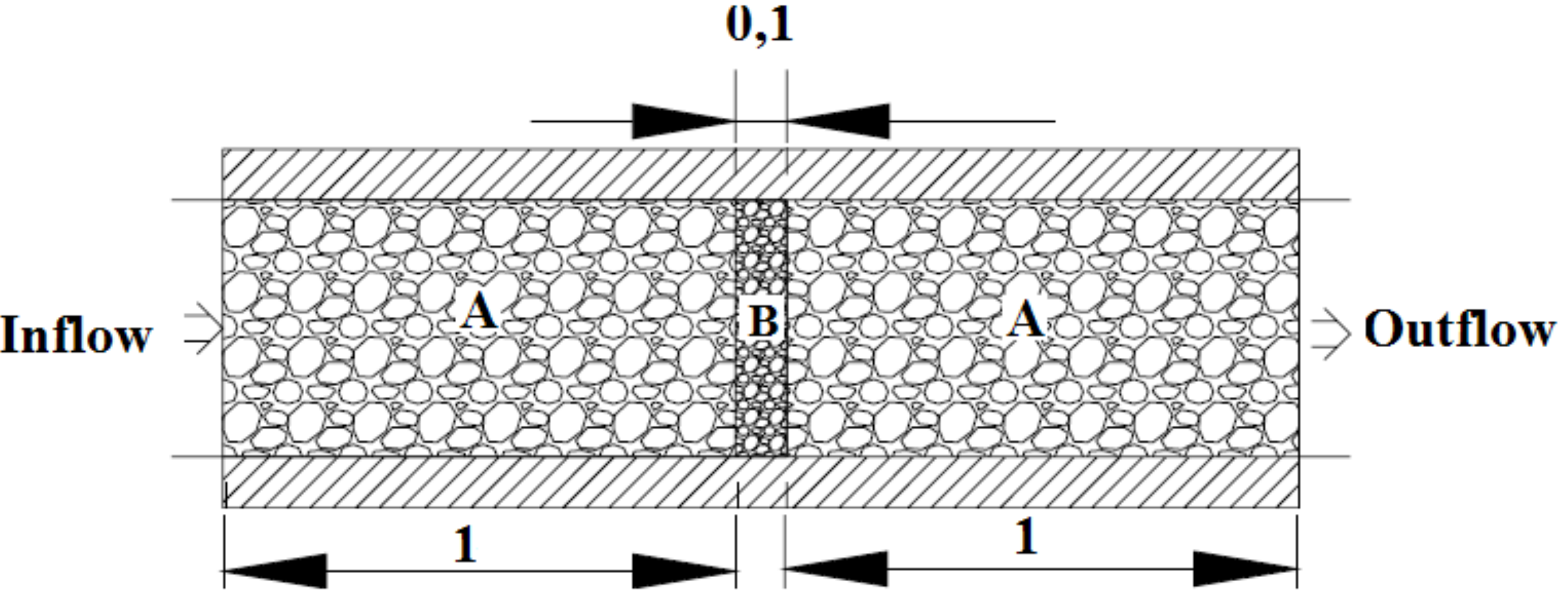}}
\caption{Geometry of the domain}
\label{fig:1}
\end{figure}

\begin{table}[htb]
  \centering
  \begin{tabular}{l|c|c} 
    & Medium A & Medium B \\ \hline
    Porosity $\phi$ ($-$) & 0.25 & 0.5 \\
    Total immobile concentration $T_S$ & 1 & 10 \\
\hline
  \end{tabular}
  \caption{Physical properties of the materials}
  \label{tab:propmat}
\end{table}

The chemical reactions are summarized in Table~\ref{tab:1}. The 7
reactions involve 9 mobile species (in the aqueous phase) and 3
immobile species (in the solid phase).

The characteristic feature of this chemical system is that it contains
large stoichiometric coefficients 
that range from -4 to 4 and a large variation of equilibrium constants
between $10^{-12}$ and $10^{35}$. 

Boundary and initial conditions are presented in
Table~\ref{tab:2}. The simulation involves an injection period
followed by a leaching period, so that the system is returned to its
initial state. Note that the total for the immobile species is given
in Table~\ref{tab:propmat}.

\begin{table}[htb]
\begin{minipage}[c]{0.54\textwidth}
  \begin{center}
\begin{tabular}{|c|cccc|c|c|}
\hline
 & ${ X_1}$ & ${ X_2}$ & ${ X_3}$ & ${ X_4}$ & { $S$} & { $K$}\\
\hline
${ C_1}$ & 0 & -1 & 0 & 0 & 0 & ${\bf 10^{-12}}$\\
${ C_2}$& 0 & 1 & 1 & 0 & 0 & 1\\
${ C_3}$ & 0 & -1 & 0 & 1 & 0 & 1\\
${ C_4}$ & 0 & {\bf -4} & 1 & $ 3$ & 0 & $10^{-1}$\\
${ C_5}$ & 0 & ${\bf 4}$ & $3$ & 1 & 0 & ${\bf 10^{+35}}$\\
\hline
${ CS_1}$ & 0 & 3 & 1 & 0 & 1 & $10^{+6}$\\
${ CS_2}$ & 0 & -3 & 0 & 1 & 2 & $10^{-1}$\\
\hline
\end{tabular}
\caption{Morel tableau for Chemical equilibrium}
\label{tab:1}
\end{center}
\end{minipage}
\hfill
\begin{minipage}[c]{0.44\textwidth}
\begin{center}
\begin{tabular}{|l|cccc|}
\hline
Total & $T_{1}$ & $T_{2}$ & $T_{3}$ & $T_{4}$  \\
Conc.  & 0 & -2 & 0 & 2 \\
\hline
{\bf Injection} & 0.3 & 0.3 & 0.3 & 0 \\
t$\in$[0,5000]&  &  &  & \\
\hline
{\bf Leaching}  & 0 & -2 & 0 & 2 \\
t$\in$[5000,..]&  &  &  & \\
\hline
\end{tabular}
\caption{Boundary conditions}
\label{tab:2}
\end{center}
\end{minipage}
\end{table}

\subsubsection{Sample results}

The results obtained in~\cite{Momascompar} indicate that one needs to refine
the mesh around medium B if one is to obtain accurate results. For all
test cases, we use a mesh such that $h_A = 4 h_B$. 
\MK{The computations were carried out with the various methods
  presented in this paper. When used with the appropriate numerical
  parameters they all gave comparable results. The figures in this
  section were obtained with the $h$-method.
}

Figure~\ref{fig:2x} shows profiles of the concentrations of several
species. The left and middle images show concentrations of (a part of)
mobile species $C1$ and immobile species $S$ at an early time $t=10$,
whereas the right image shows concentration of aqueous species $C_2$
on a smaller interval during the leaching period, at $t=5010$. The
middle image highlights the effect of the heterogeneity at $x=1$.

\begin{figure}[htbp]
\begin{center}
\includegraphics[width=\textwidth]{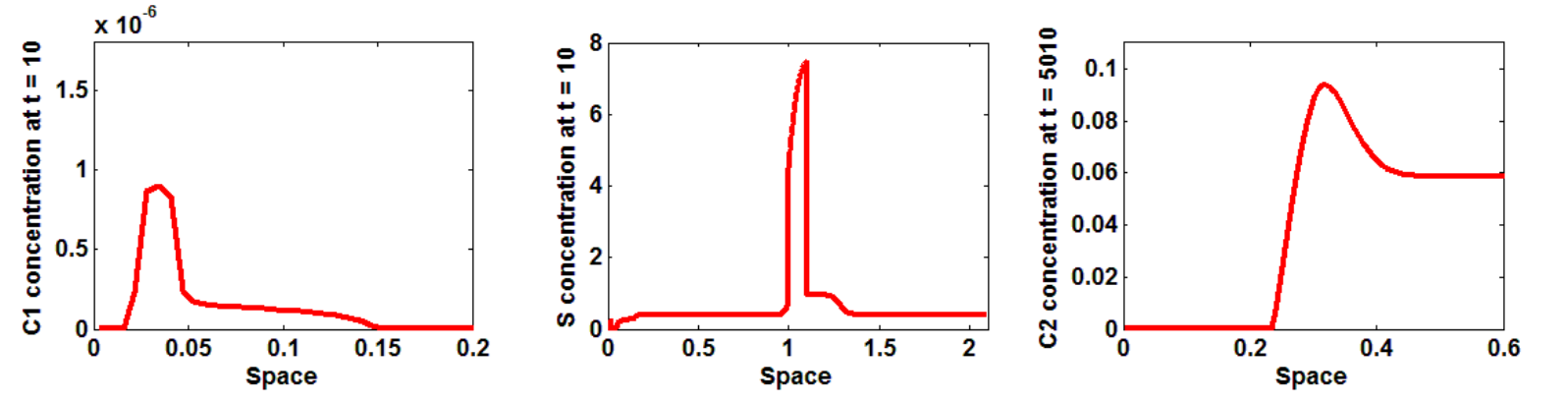}
\caption{Concentrations profiles. Left figure: $C_1$ at $t=10$, middle
figure $S$ at $t=10$, right figure $C_2$ at $t=5010$.}
\label{fig:2x}
\end{center}
\end{figure}

Figure~\ref{fig:2t} shows elution curves, that is evolution of the
concentrations at the end of the domain ($x=2.1$) as a function of
time, for a mesh with $384$ elements. Notice that the evolution of
$C5$ follows a fairly complex pattern.  It turns out that the accuracy
for both species $X3$ and $C5$ is quite sensitive to the mesh size
used. We come back to this point in detail in the next subsection.

\begin{figure}[htbp]
\begin{center}
\includegraphics[width=\textwidth]{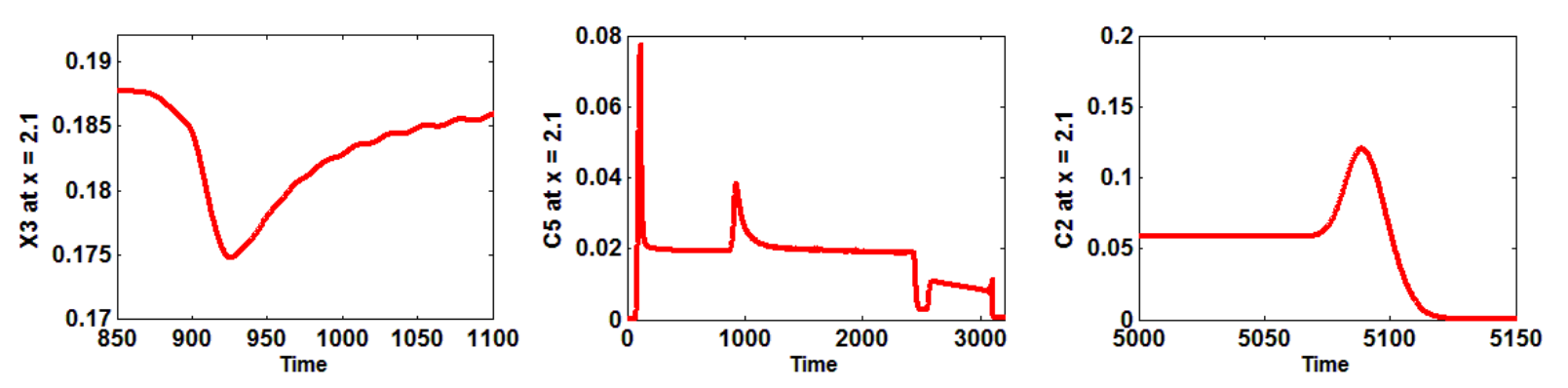}
\caption{Elution curves (concentrations at right end of the domain as
  a function of time). Left image: species $X3$, middle image:
  species $C5$, right image: species $C2$}
\label{fig:2t}
\end{center}
\end{figure}

The results shown on these figures are in good agreement with those
showed by various groups in the comparison paper\cite{Momascompar}. 



As mentioned above, some of the species show a very sensitive
dependence on the mesh and time step used. It has proven necessary to
use very fine meshes, as well as small time steps, in order to resolve
these species accurately. \MK{We now study in more detail how the
  accuracy depends on the time and space meshes}

\subsubsection{Influence of spatial discretization}

The evolution of species $X3$ and $C5$, as a function of time,
exhibits unphysical oscillations. The origin of these oscillations has
been explained in~\cite{lagneau:10}, and is due to the interplay
between the very stiff reactions and the spatial discretizations. They
should decrease as the mesh is refined.

Figures~\ref{fig:9} shows that this is indeed what
happens as we increase the number of discretization points. A mesh
with 384 points gives qualitatively correct results, but we have also
made use of a finer meshes in order to obtain more accurate results. 

\begin{figure}[htbp]
\begin{center}
\includegraphics[width=0.45\textwidth]{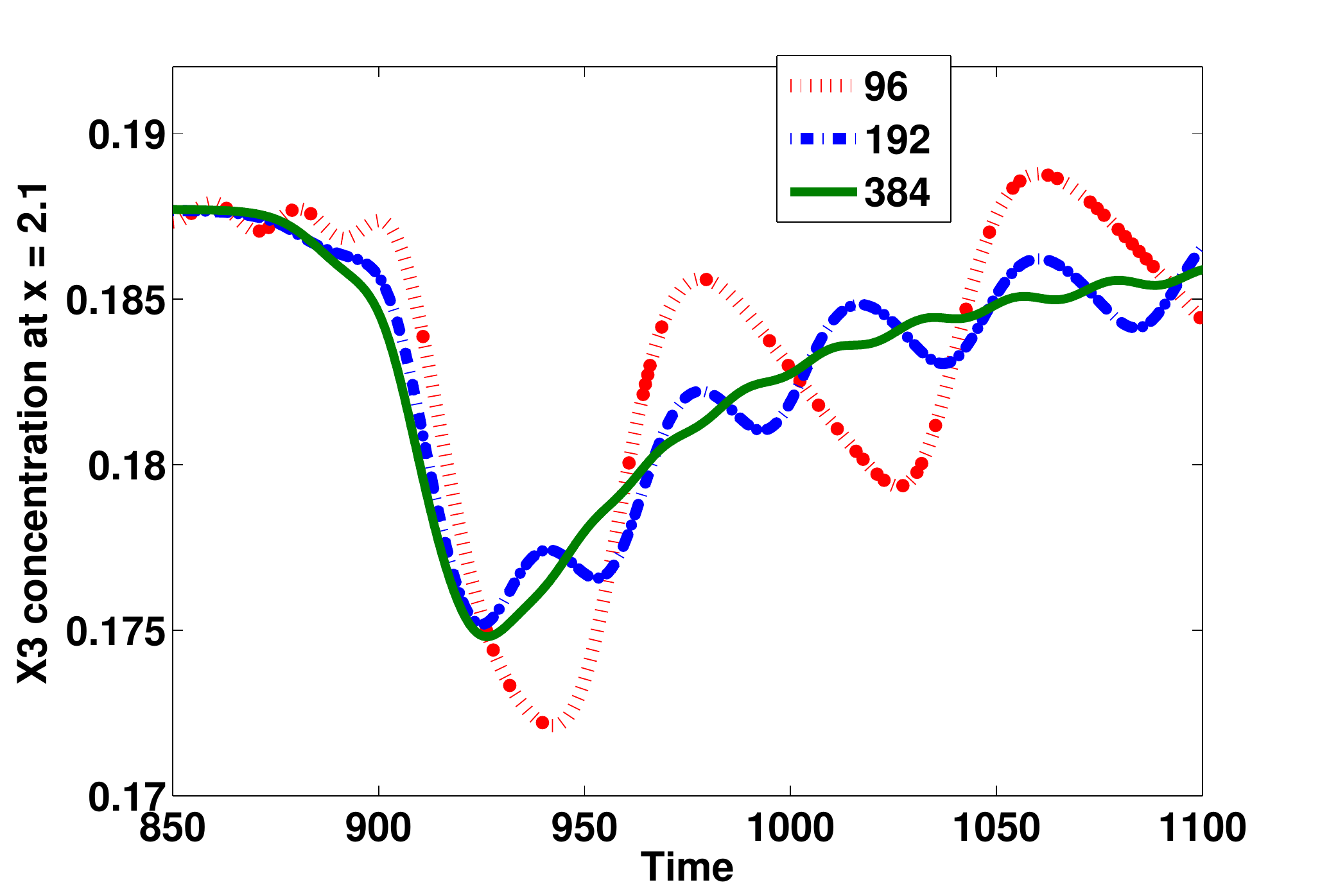} 
\includegraphics[width=0.45\textwidth]{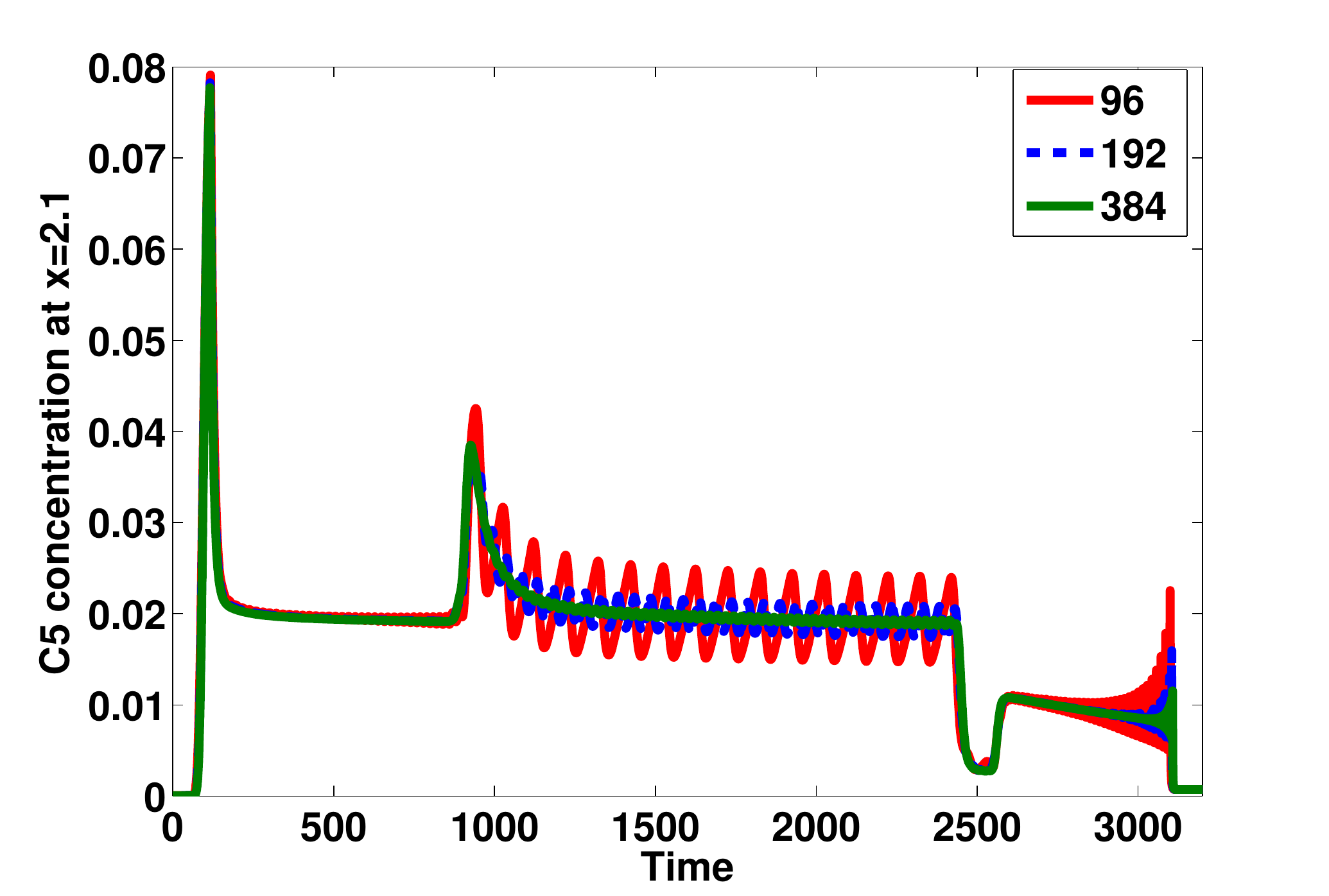} 
\caption{Elution curves of $X_3$ (left) and $C_5$ (right) concentration, for various mesh resolutions}
\label{fig:9}
\end{center}
\end{figure}

\medskip
The effect of the mesh size on the accuracy of the results is also
shown by looking in detail at two specific species: mobile species
$C_1$ and immobile species $S$, both at time $t=10$. They both exhibit a
sharp peak, and we focus on the accuracy with which the location and
amplitude of the peak can be determined. These elements were part of
the comparison criteria for the benchmark, and were examined in detail
in~\cite{Momascompar}. 

We also discuss the influence of the discretization scheme, by
comparing the three schemes introduced in section~\ref{sec:convnum}
from the point of view of accuracy. Their relative
efficiencies are compared in section~\ref{sec:CPU}. \MK{In
  figures~\ref{fig:6C1} and~\ref{fig:6S}, the curves are labeled by
the total number of discretization points in space. The coarsest mesh,
corresponding to $n=96$, has $\Delta x = 0.025$ in medium A and
$\Delta x = 0.00625$ in medium B. In both cases the time step was
chosen as $\Delta t = \Delta t_c$ (advective time step), with a
CFL=0.1, as given by the smaller space mesh in medium B. Note that
since the CFL is fixed, each curve also corresponds to a different time step.} 

\medskip 
Figure~\ref{fig:6C1} compares the concentrations of species $C_1$ on
the interval $[0, 0.2]$, at time $t=10$, as computed by the three
discretization schemes on increasingly refined meshes.
As soon as the mesh has sufficiently many points to successfully
resolve the solution, all three schemes give identical solutions. On the
other hand, one needs at least 384 points (and preferably 768) to
obtain a satisfactory solution.

\begin{figure}[htbp]
\begin{center}
\includegraphics[width=0.98\textwidth]{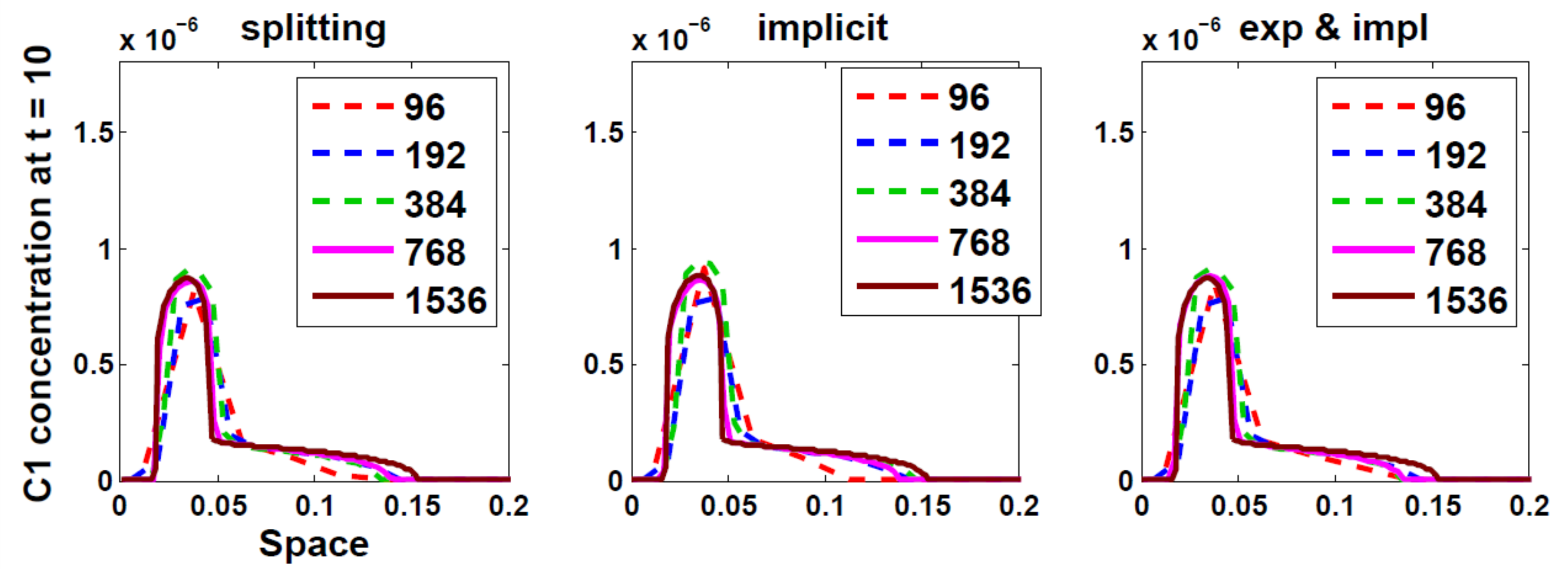}
\caption{Concentration of species $C_1$ at $t=10$ for the different
  discretization schemes and various mesh resolutions}
\label{fig:6C1}
\end{center}
\end{figure}
Figure~\ref{fig:6S} compares the concentration of species $S$ (a
sorbed species) on the interval bracketing the location of the peak
$[0, 0.15]$, at time $t=10$, as computed by the three discretization
schemes on increasingly refined meshes.  Here also, the three schemes
give identical solutions when the mesh is fine enough. This time, one
needs at least 768 mesh points to obtain a converged solution. One
should compare Figure~6 in~\cite{Momascompar}, and also refer to
Table~4 there, where the location and amplitude of the peak are
tabulated for all the methods used in the benchmark. We have obtained
values of $x=0.0175$ for the location of the peak, and $S=0.985$ for
its height. These values are in the range reported by the other teams,
but are different form the ``mean'' values as reported
in~\cite{Momascompar}.  They are however very close to the
``reference'' values found in~\cite{carray:10}.

\begin{figure}[htbp]
\begin{center}
\includegraphics[width=0.98\textwidth]{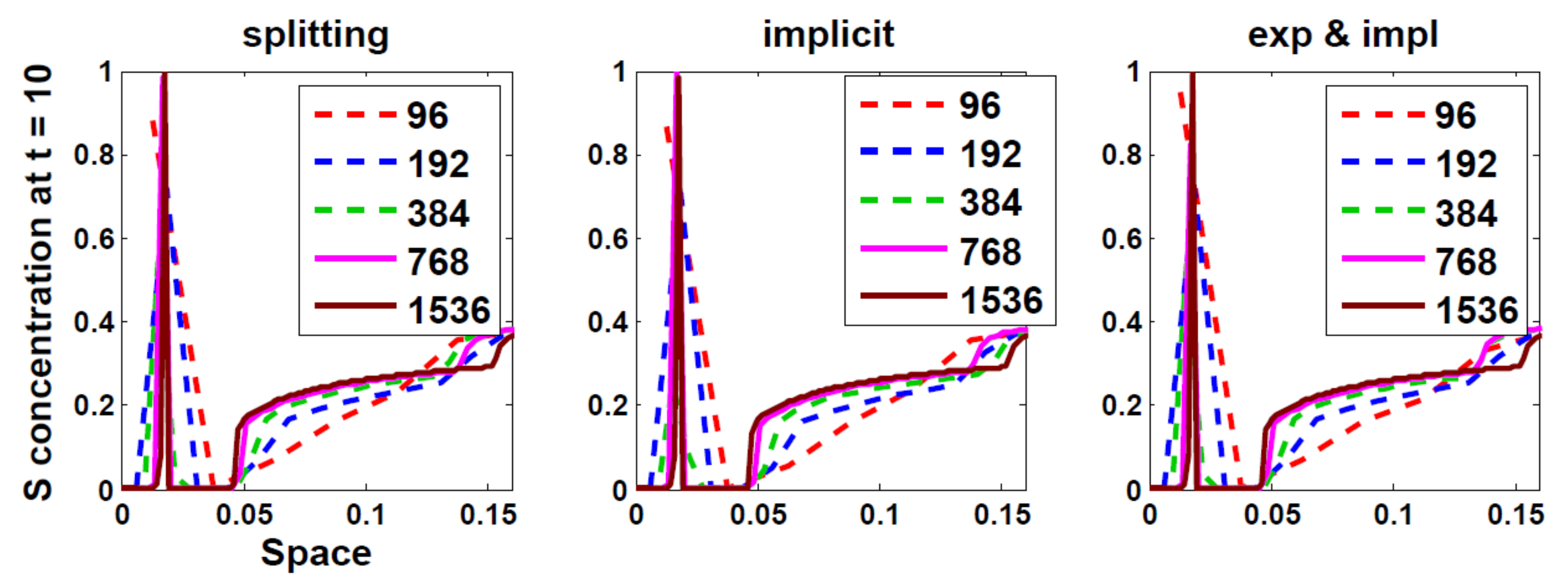}
\caption{Concentration of species $S$ at $t=10$ for the different
  discretization schemes and various mesh resolutions}
\label{fig:6S}
\end{center}
\end{figure}

\subsubsection{Influence of temporal discretization}

We now discuss the influence of the time step, or more precisely the
value of the CFL coefficient (given by $CFL=u \phi \Delta t / h_{\min}
$ on the accuracy.

The three schemes used for time discretization have different
behavior with respect to the choice of time step:
\begin{description}
\item[The splitting method] allows the use of different time steps
  (and different numerical methods) for the advection and the
  diffusion step. The advection time step is restricted by the CFL
  condition (it is an explicit sub-step), whereas the time step for
  diffusion is not restricted by stability. We have used three
  different choices: chose the same time step for advection and
  diffusion (respecting the CFL condition), once with a CFL condition
  of 1, and once with a CFL condition of $0.1$, and choose a diffusion
  time step 3 times larger than the advection time step (the latter
  chosen by the CFL condition). 
\item[The fully implicit method] here there is only one time step, and
  no stability restriction. We have compared three time steps,
  corresponding to CFL coefficients of $0.1$, $1$ and $3$ respectively.
\item[The explicit--implicit method] this method also imposes a single
  time step, and in addition it is subject to the stability condition
  of the explicit method, so the time step is restricted to $CFL \le
  1$. We compare 2 time steps, corresponding to CFL coefficients of
  $0.1$ and 1 respectively.
\end{description}

\medskip
\MK{Figure~\ref{fig:Sdt} compares} the concentration of species $S$ as
computed by the three methods, for the time step sizes chosen as explained
above, for two different mesh resolutions.

\MK{For all three methods the location of the peak was correctly
  determined even for larger values of the time step and the mesh
  size, but its amplitude was only correctly estimated for the finer
  mesh size.

For both splitting and explicit-implicit schemes, it is not necessary to 
use a CFL of 0.1 (a CFL of 1 is enough) for the peak amplitude to reach the value 1, but what is 
needed is to refine the mesh (up to 1536 nodes). However, the fully implicit scheme needs 
both a  small time step corresponding to a CFL of 0.1 and a fine mesh with
1536 nodes to obtain the same results as the other schemes.}
\begin{figure}[thbp]
  \begin{center}
\includegraphics[width=0.98\textwidth]{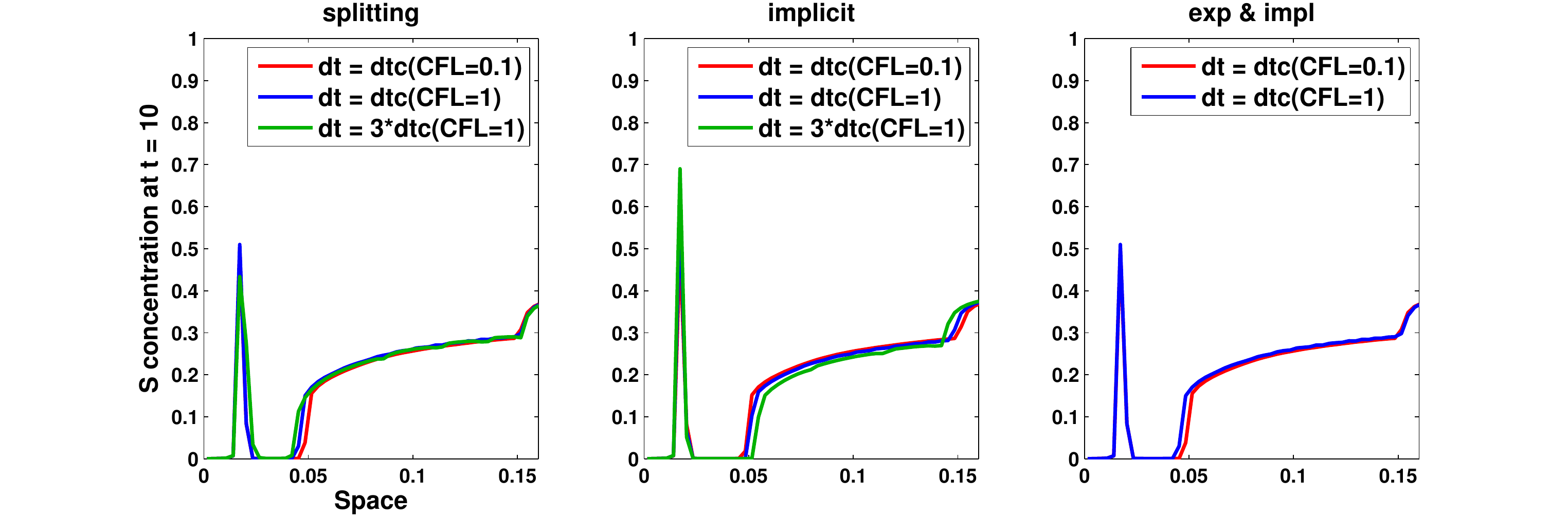} \\[1.5ex]
\includegraphics[width=0.98\textwidth]{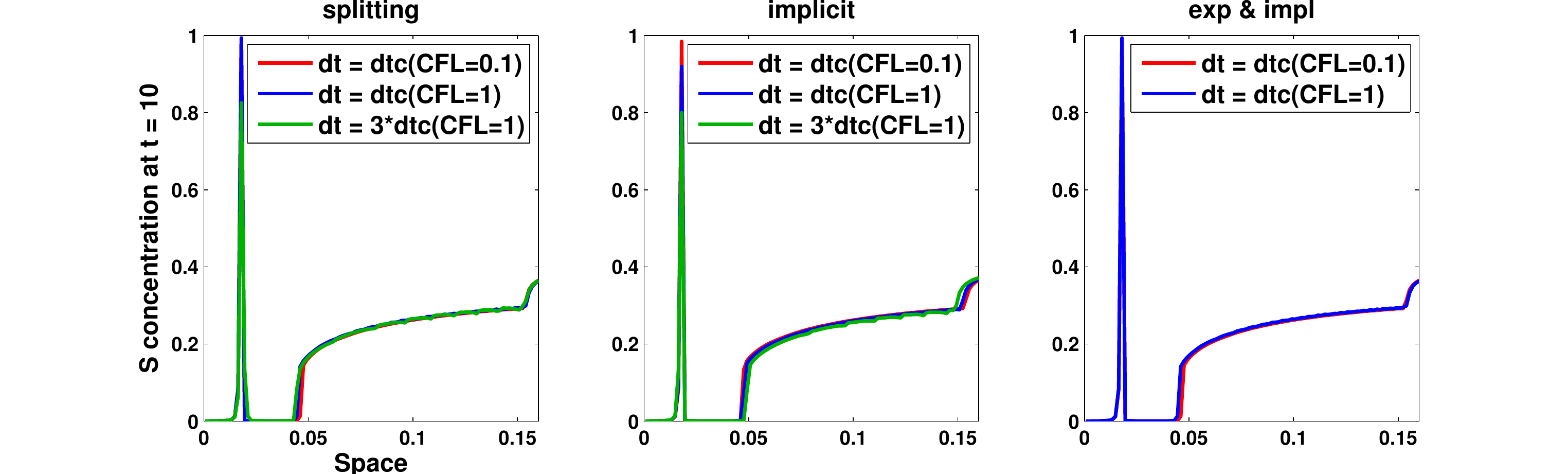}
\caption{\MK{Effect of the CFL} condition on the concentrations in $S$
  (top figure 768 nodes, bottom figure 1536 nodes)}
\label{fig:Sdt}
\end{center}
\end{figure}


\subsubsection{CPU time}
\label{sec:CPU}

We now compare the relative efficiency of the three time
discretization methods. One should keep in mind that the problem under
study is a 1D model, so that solving a linear system is a not very
time consuming. The conclusions reached below would have to be
re-examined for a 2D (and even more for a 3D !) model.  We have also
compared the effect of using the fully implicit method with a variable
time step (albeit not an adaptive choice, a sequence of pre-computed
time steps is used for different phases of the solution evolution).

\MK{
Figure~\ref{fig:11} compares the CPU times required by the
$h$-method with different time-discretization schemes as the space
mesh is refined. 
The time step was chosen as follows (in all cases, the smallest mesh
size was used):
\begin{description}
\item [For the splitting method] The diffusion time step is 3 times larger than
the advection time step that respects a CFL coefficient of 1.
\item [For the explicit-implicit method] One time step is used
  controlled by a CFL condition of 1.
\item [For the fully implicit method] One time step is used
  corresponding to a CFL coefficient of 3 (no stability restriction).
\item [For the fully implicit (variable) method] Variables time steps
  are used, as described in table~\ref{tab:cflvar}.  The time steps
  are chosen so as to have a small time step when strong variations
  happen   due to the reactions (especially during injection and leaching period) and a large
 time step is used for the intervals that represent the steady state.
\end{description}
\begin{table}[htbp]
    \centering
    \begin{tabular}{crrrrrrr}
Start time & 0 & 20 & 100 & 2500 & 3200 & 5000 & 5100  \\
End time  & 20 & 100 & 2500 & 3200 & 5000 & 5100 & 6000 \\ \hline
CFL value       &1 & 5&10 & 5& 40 &1 & 40 
    \end{tabular}
    \caption{CFL values for variable time step simulation}
    \label{tab:cflvar}
  \end{table}
}%
As expected, using a variable time step results in large
savings (while maintaining the accuracy). Among the 3 schemes with
fixed time step, the explicit--implicit method is the most expensive,
with the fully implicit and the splitting methods leading to
comparable costs. 
\begin{figure}[htbp]
  \begin{center}
\includegraphics[width=0.5\textwidth]{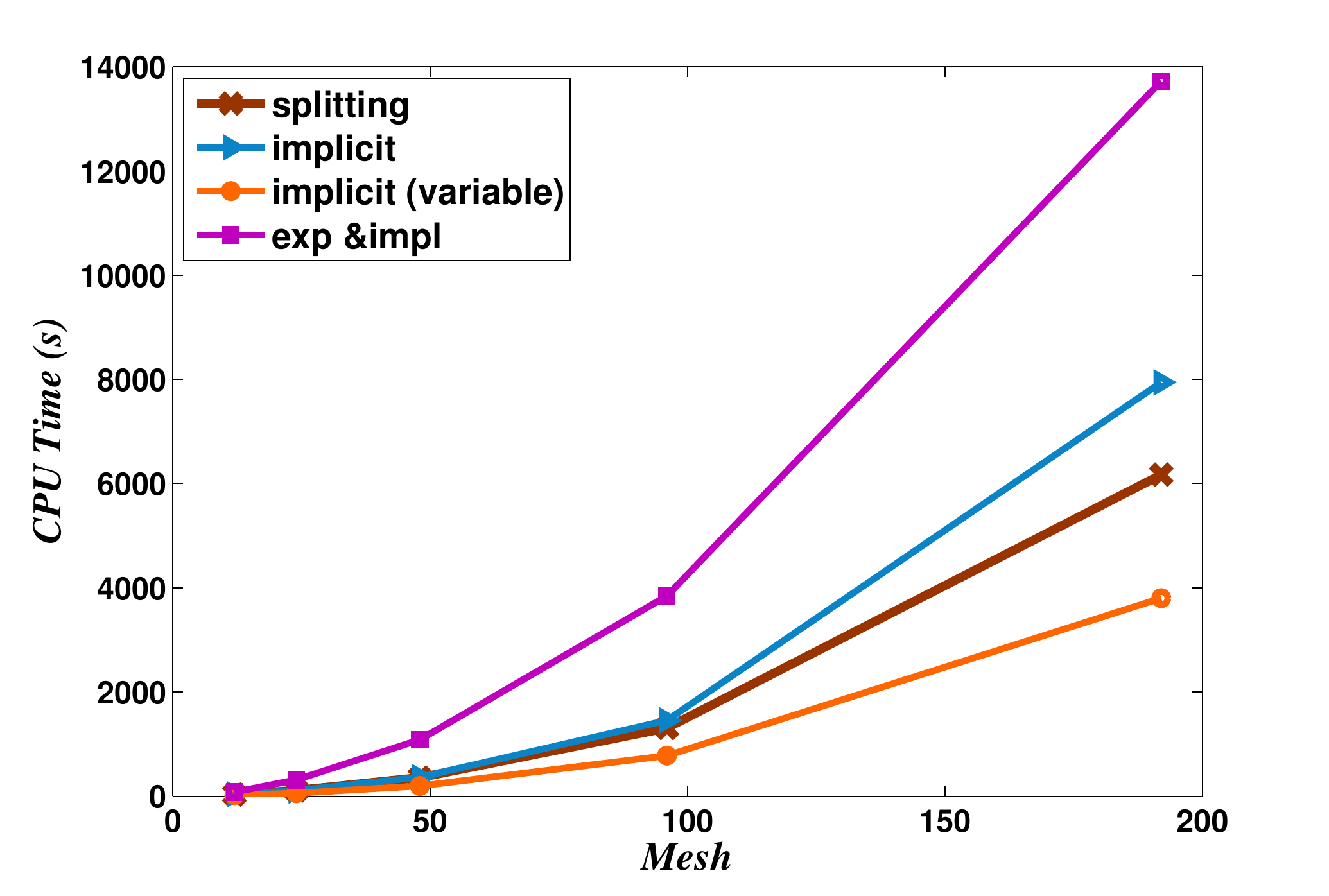}
\caption{CPU time required by the time discretization methods \MK{as the
  space mesh is refined}}
\label{fig:11}    
  \end{center}
\end{figure}

\subsubsection{Influence of preconditioning strategy} 
\label{sec:numprec}

In this section, we compare the various preconditioning strategies
discussed in section~\ref{sec:precond}. Our main criteria will be the
number of linear and non-linear iterations. We have not tried to
optimize the inexact Newton strategy, but have just relied on the
default choices as provided in the Newton--Krylov code (we use the
\texttt{nsoli} code from the book by
C. T. Kelley~\cite{Kelley03}). \MK{In all experiments, GMRES was used
  without restart, and with a maximum number of allowed iterations
  fixed at 40}.  

Figure~\ref{fig:4} shows how  these numbers change as the mesh is refined. The
various linear preconditioning strategies (applied to the coupled
system) are compared with the elimination, or nonlinear
preconditioning, strategy. As predicted, the nonlinear elimination
strategy has the smallest number both for non-linear and for linear
iterations. It also shows a behavior that is independent of the mesh
size. 
\begin{figure}[bhtp]
\begin{center}
\includegraphics[width=0.45\textwidth]{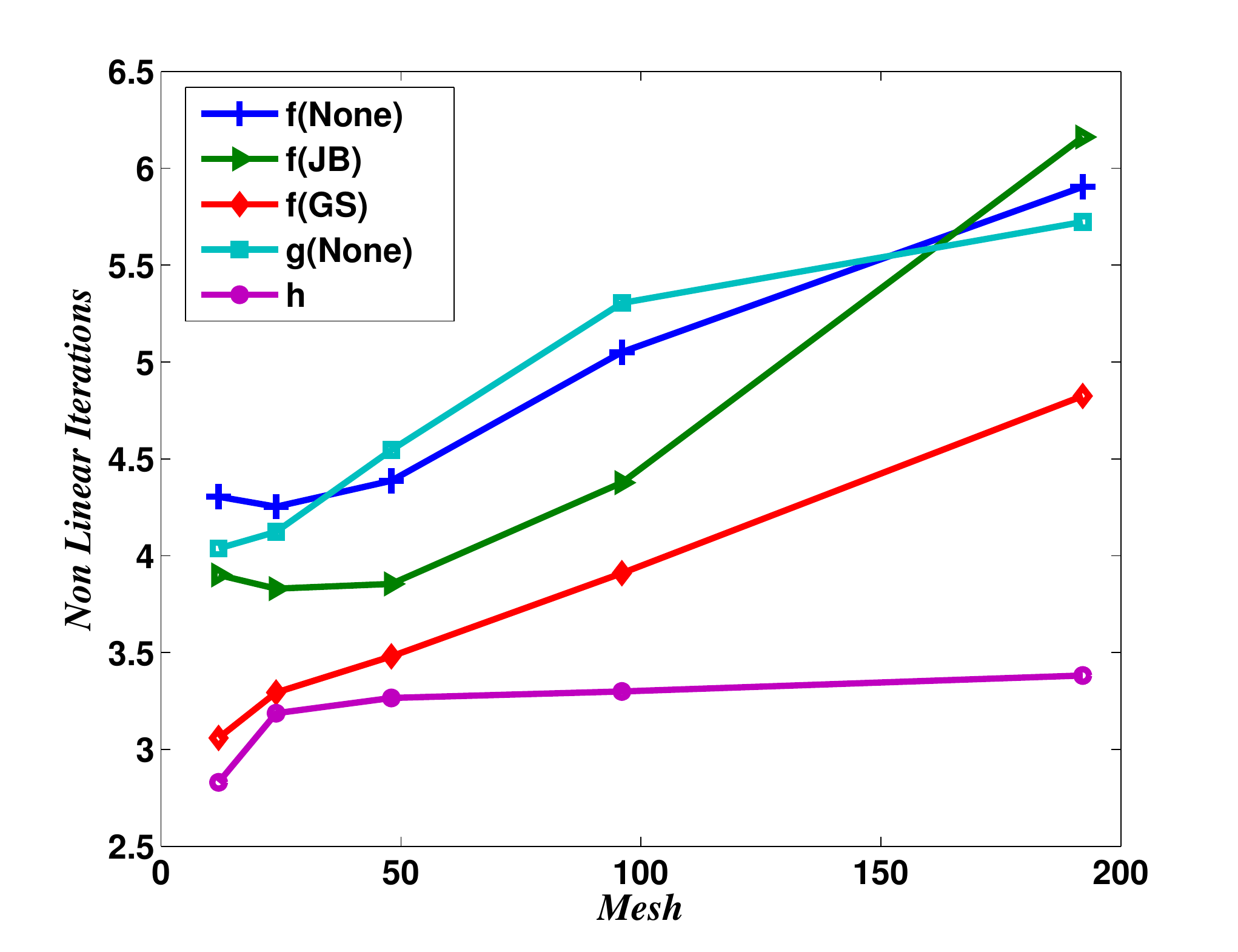} 
\includegraphics[width=0.45\textwidth]{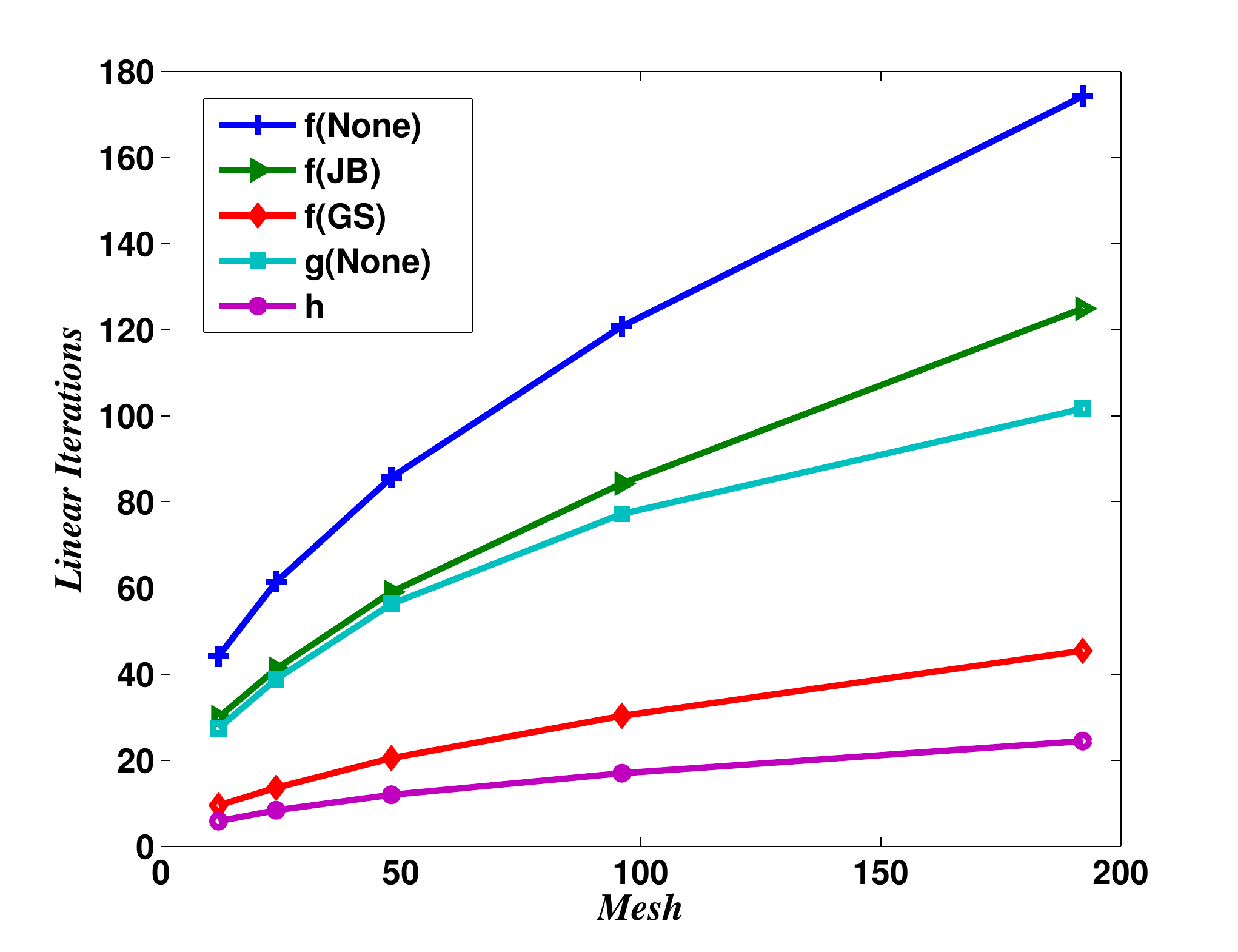}
\caption{Non-linear (left) and linear (right) iterations for different
preconditioning strategies}
\label{fig:4}
\end{center}
\end{figure}

The unpreconditioned method is unsurprisingly non scalable, at least
as far as the linear iterations are concerned. The number of
non-linear iterations grows only weakly with the number of mesh
points. The same is true for Jacobi preconditioning. 

Gauss--Seidel preconditioning, on the other hand, show only a modest
increase in the number of linear iterations, and a behavior for the
non-linear iterations that is in between that of the unpreconditioned
and the elimination strategies. 

As explained in section~\ref{sec:precond}, a limitation of the
elimination strategy is that it requires an exact solution of the
transport step. In this case, the Gauss--Seidel preconditioner might
prove useful: replacing the transport solve step by an approximation,
such as several iterations of a multi-grid solver, should lead to a
more efficient solution method, with similar convergence behavior. We
plan to explore this strategy in a forthcoming work.

Last, figure~\ref{fig:5} shows the time required by the various
methods. Since the cost of the methods is comparable, the ordering is
the same as that in the previous figure. It confirms the good
efficiency of the elimination strategy, with Gauss--Seidel
preconditioning as a distant second.

\begin{figure}[htbp]
\begin{center}
\includegraphics[width=0.52\textwidth]{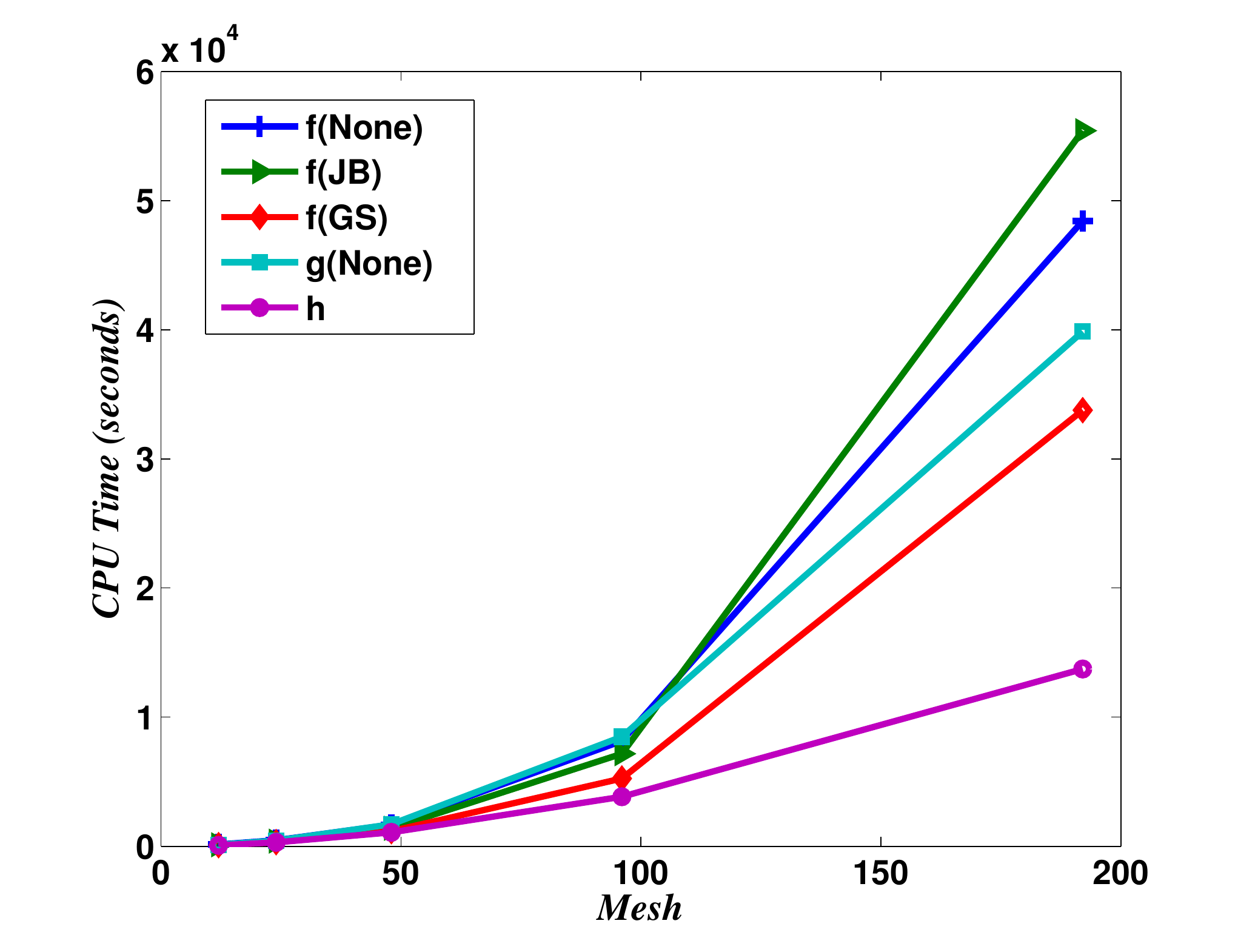}
\caption{CPU time as a function of the mesh size for the various
  solution methods}
\label{fig:5}
\end{center}
\end{figure}

\medskip
We can try and summarize the relative performances of the various methods as follows:
\begin{itemize}
\item the original $f$ formulation is not numerically scalable,
  neither  at the non-linear level, nor at the linear level;
\item the block Jacobi preconditioner applied to $f$ does not bring any
  improvement;
\item the block Gauss--Seidel preconditioner improves the linear
  performance, but does have a significant non-linear effect (nor was
  it expected);
\item the $ g$ formulation improves the linear performance, but not as
  much as Gauss--Seidel preconditioning;
\item the $h$ formulation, after elimination of the $C$ unknowns is the
  only method that gives a convergence independent of the mesh size. 
\end{itemize}

Of course, these conclusions are more or less natural: methods $f$ and
$g$ keep the ill-conditioning from the second order operator (except
for $g$ at the linear level). The elimination method, leads to a
bounded operator on $L^2$ (at least formally), and is expected to give
mesh independent convergence.

This good performance of the elimination method, at least on the linear
level, can be confirmed by looking at the \emph{field of values} of
the matrix $J_h$.  
The field of values of a matrix is the subset of the complex
plane defined by
\begin{equation*}
  W(J_h) = \set{\dfrac{x^H J_h x}{x^H x}, x \in \C, x \neq 0 }.
\end{equation*}
It includes the eigenvalues and is a convex set. For a non-symmetric
matrix, the convergence of GMRES is better described by the field of
values than by the eigenvalues (see~\cite{doi:10.1137/100806485}
or~\cite{Klawonn1999}). We conjecture that the field of values of the
Schur complement $J_h$  can be
bounded away from 0, independently of $N_x$. Though we have no proof
at the moment, we have the following numerical confirmation on
figure~\ref{fig:fovJ}, which shows the field of values \MK{and
  isolines of the $\epsilon$-pseudo-spectra} of the Jacobian matrix
$J_h$ for 2 different mesh sizes. One can see that the convex hull of
the field of values is indeed approximately independent of the mesh
size.  The figure was obtained thanks to the Eigtool
software~\cite{eigtool:02,doi:10.1137/S106482750037322X}.
\begin{figure}[htb]
  \centering
  \includegraphics[width=0.45\textwidth]{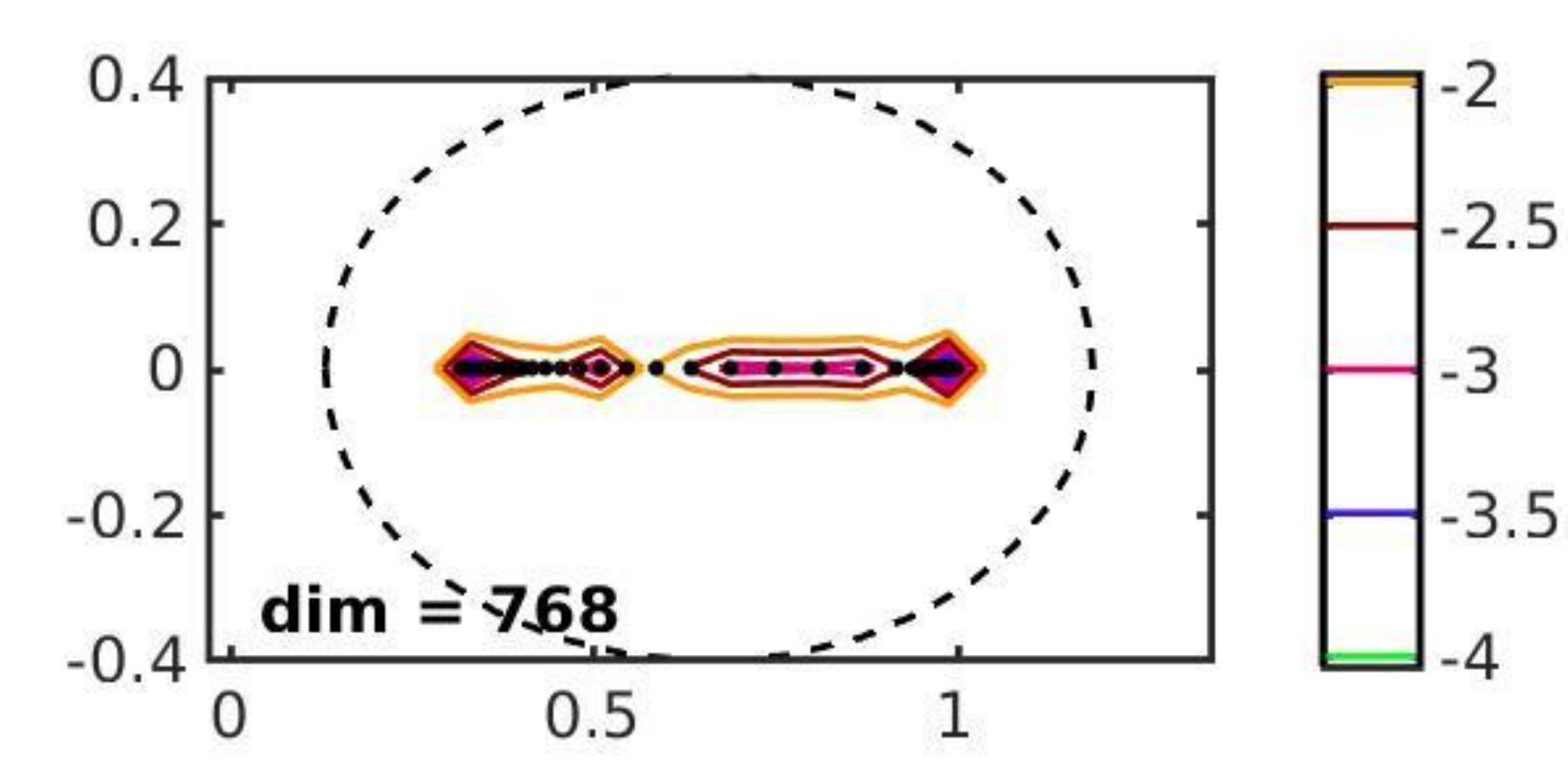}
  \includegraphics[width=0.45\textwidth]{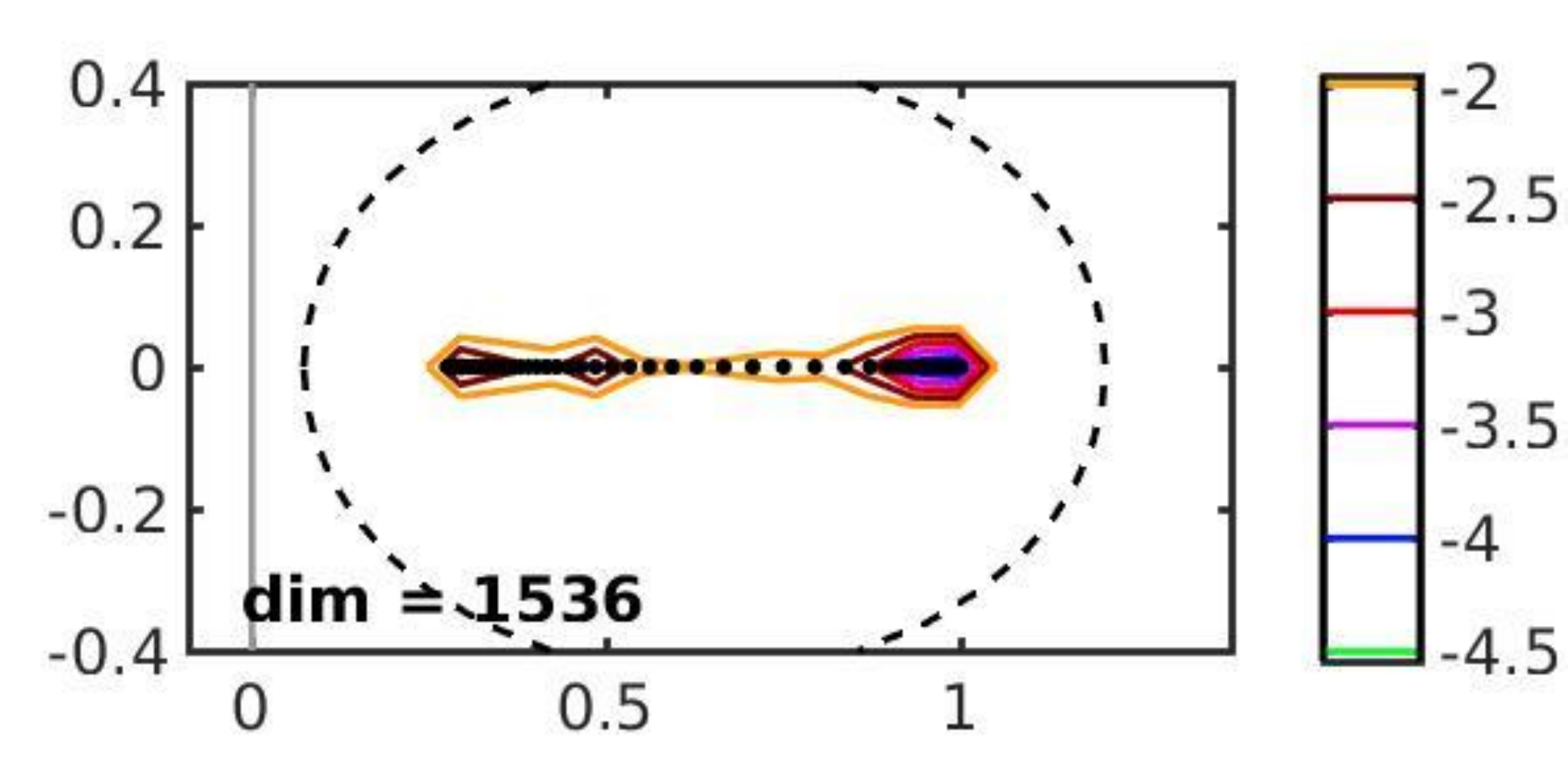}
  \caption{Field of values \MK{(dashed line) and pseudo-spectra (color) for matrix $J_h$}, with $N_x=768$ and $N_x=1536$}
  \label{fig:fovJ}
\end{figure}

\MK{
\subsection{Test 2. Ions exchange in a natural system}

Our second test case comes for a field study that includes
experimental results (see Valocchi et al~\cite{WRCR:WRCR3036}). 
 We follow the setup given in Fahs et al~\cite{Fahs2008}, as this reference includes more details for the numerical simulation. 
In this test case, four aqueous components ($\text{Na}^+$, $\text{Ca}^{2+}$, $\text{Mg}^{2+}$ and $\text{Cl}^-$) are injected into an homogeneous
landfill. During the transport, the aqueous components
react with the ion exchange sites of the soil (S). Three
reactions of ion exchange occur and lead to three
adsorbed species $\text{S}$--$\text{Na}$, $\text{S2}$--$\text{Ca}$, $\text{S2}$--$\text{Mg}$.\\
Chemical reactions, constants of
equilibrium, initial and boundary conditions are summarized in Table
~\ref{tab:3}.
\begin{table}[htb]
\begin{center}
\begin{tabular}{|c|cccc|c|c|}
\hline
 &  $\text{Na}^+$ &$\text{Ca}^{2+}$& $\text{Mg}^{2+}$& $\text{Cl}^-$& $\text{S}$ & log K\\
\hline
$\text{S}$--$\text{Na}$ & 1 & 0 & 0 & 0 & 1 & 4\\
$\text{S2}$--$\text{Ca}$& 0 & 1 & 0 & 0 & 2 & 8.602\\
$\text{S2}$--$\text{Mg}$ & 0 & 0 & 1 & 0 & 2 & 8.355\\
\hline
\hline
initial (mmol/l) & 248 & 165 & 168 & 161 & 750 &\\
injected (mmol/l) & 9.4& 2.12 & 0.494& 9.03 & &\\
\hline
\end{tabular}
\caption{Chemical reactions, initial and boundary conditions }
\label{tab:3}
\end{center}
\end{table}

In this test, we consider a column of length L=16 m. The values of the
transport parameters are given in Table ~\ref{tab:4}
\begin{table}[htb]
\begin{center}
\begin{tabular}{|l|l|}
\hline
Darcy  velocity   & u=0.2525 [m/h]\\
\hline
Dispersion
coefficient   & D=0.74235 [m2/h]\\
\hline
Porosity  & 0.35\\
\hline
\end{tabular}
\caption{Transport parameters}
\label{tab:4}
\end{center}
\end{table}
The simulated time is T=5000 h. The mesh size is equal to 0.08m , the
time step is equal to 0.11089 h corresponding to CFL coefficients of
1.

Figure~\ref{fig:ca-mg} shows the evolution of the concentrations
  in $\text{Ca}^{2+}$ and $\text{Mg}^{2+}$ at the end of the column, as a function
  of time (we use the same units as Fahs et al~\cite{Fahs2008} which
  are different than those originally used by Valocchi et
  al~\cite{WRCR:WRCR3036}).
\begin{figure}[htb]
  \centering
  \includegraphics[width=.45\textwidth]{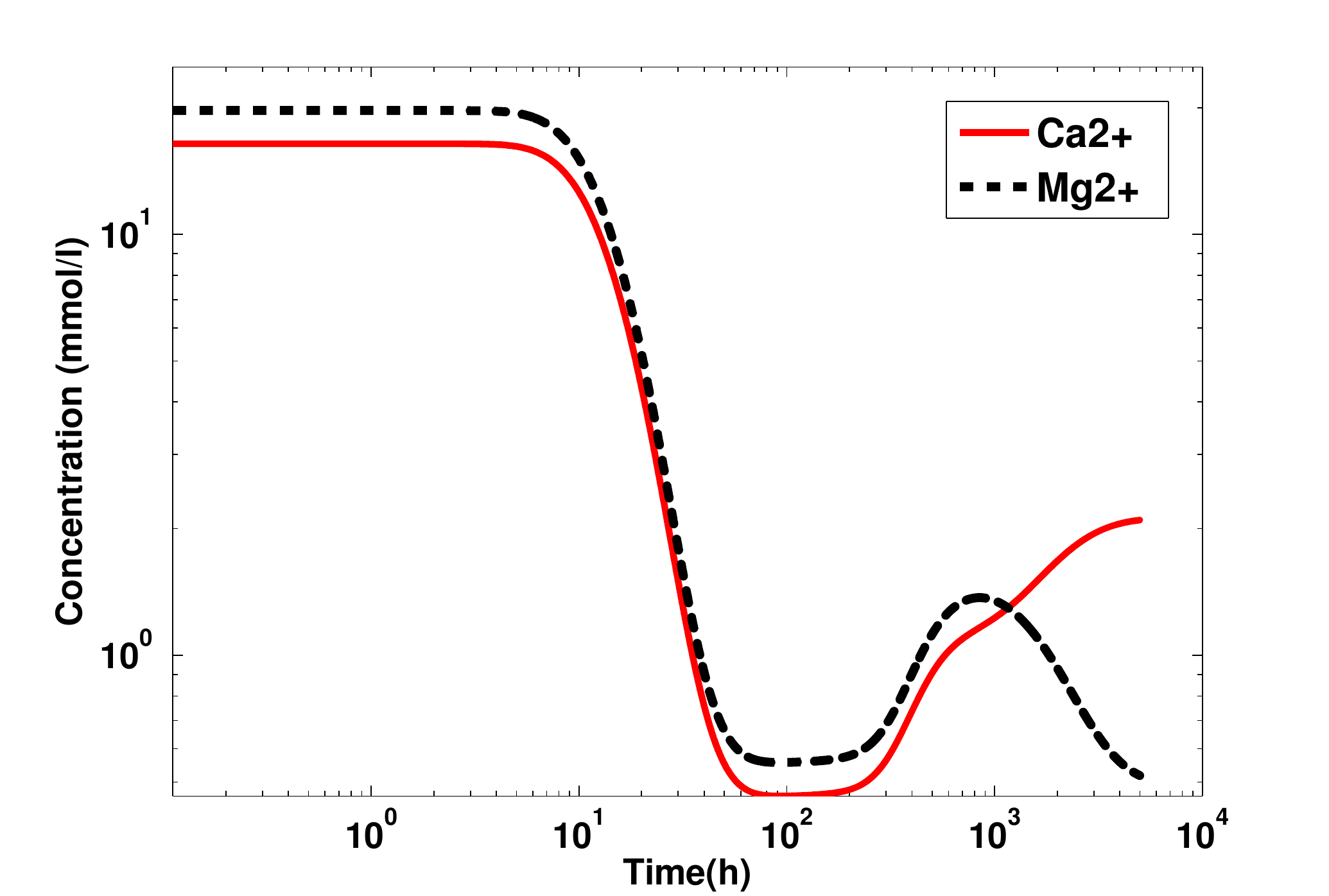}
  \caption{Variation of $\text{Ca}^{2+}$ and $\text{Mg}^{2+}$ concentrations as function of time in the output of the domain}
  \label{fig:ca-mg}
\end{figure}
The results are in qualitative agreement with those in both references 
(Figure 1 in Fahs et al~\cite{Fahs2008}, Figure 11 in Valocchi et al~\cite{WRCR:WRCR3036}), though it appears difficult to  make a precise 
comparison as the curves are in logarithmic scale on both axes.

Finally, figure~\ref{fig:landnliter_valo} compares the number of
linear and non-linear iterations for the various methods
presented. In this case, the number of non-linear iterations was very
close for all methods (with Block-Jacobi preconditioning a close
winner), and both the Gauss-Seidel preconditioning and the $h$-method again giving a convergence independent of
the mesh size 
\begin{figure}[htbp]
  \centering
    \includegraphics[width=.45\textwidth]{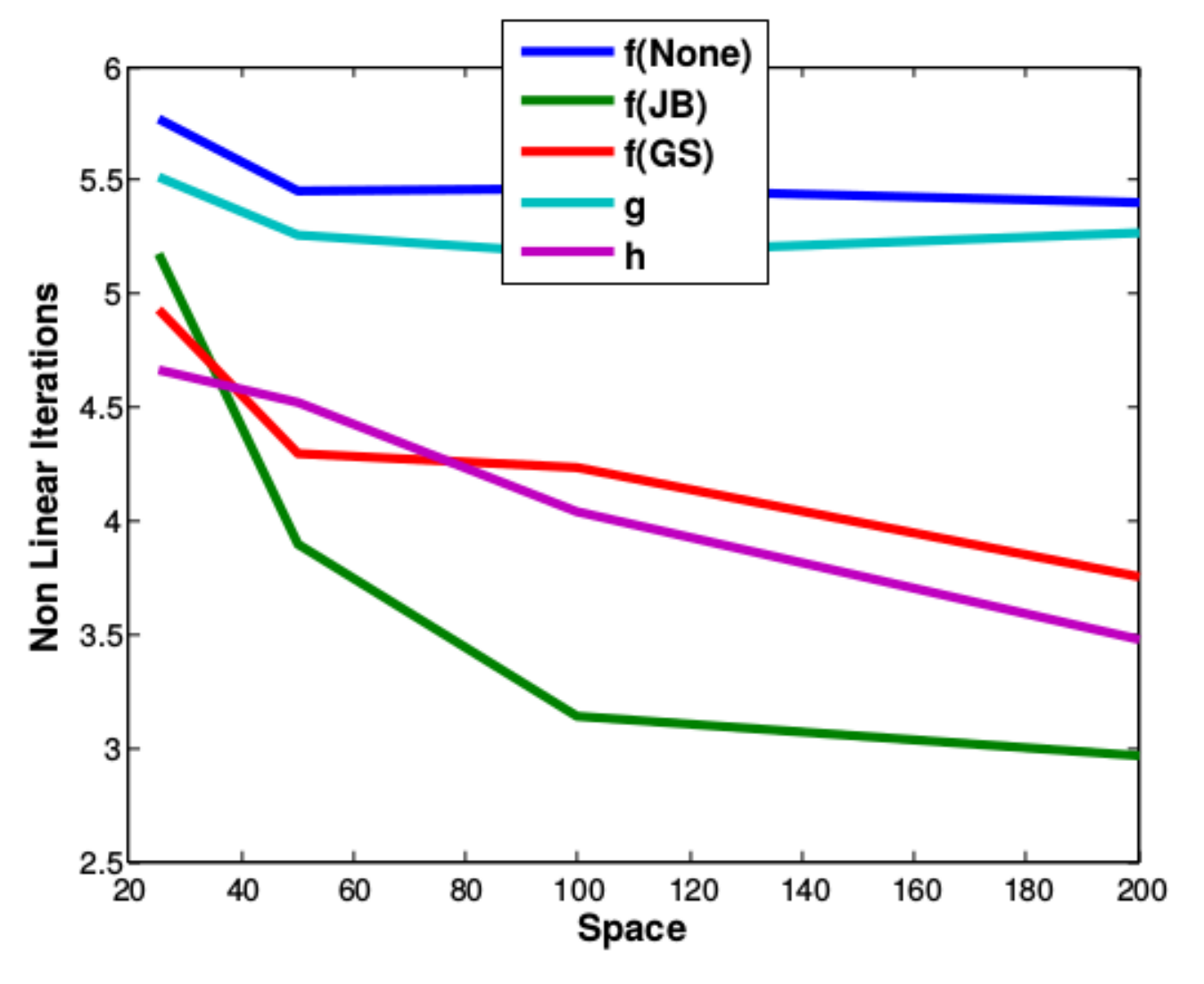}
    \includegraphics[width=.45\textwidth]{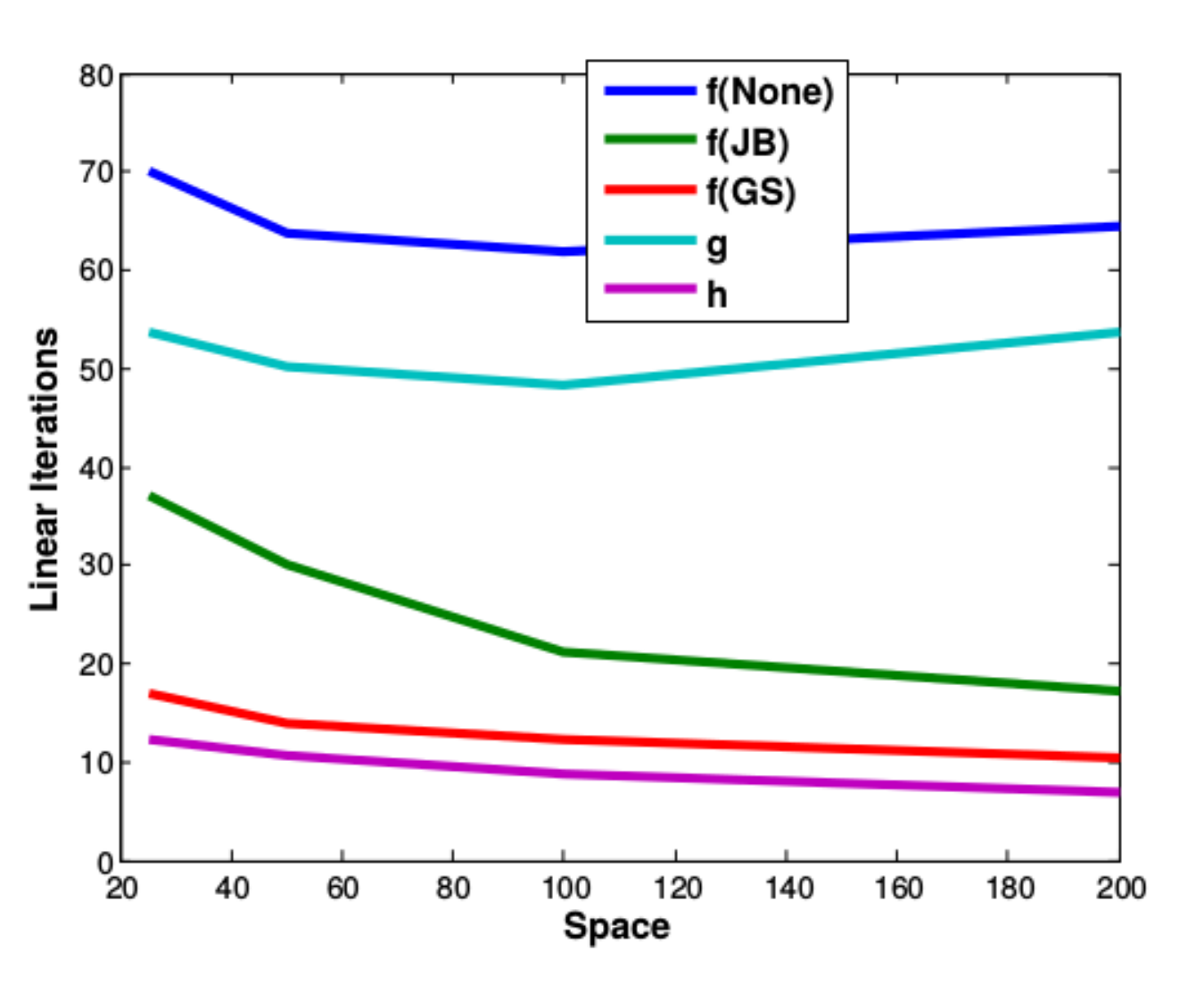}
  \caption{Number of non-linear (left) and linear (right) iterations for ion exchange
    example}
  \label{fig:landnliter_valo}
\end{figure}
}

  \section{Conclusion and perspectives}

In this work, several methods for improving the efficiency of a global 
approach for coupling transport and chemistry based on a Newton-Krylov
method were studied. 

 An alternative formulation and block preconditioners for linear system 
were used to accelerate the convergence of the Krylov method and to reduce CPU time.
The results show that the alternative formulation requires less CPU time than other preconditioners, and the
number of linear and non linear iterations becomes almost independent of the mesh. 

The reactive transport benchmark 1D problem proposed by GNR MoMaS was
used to demonstrate the efficiency of the method. 

Natural extensions of this work to multidimensional situations are
under way, as well as extensions to handle kinetic reactions.


\bibliography{ArticlePrecond}

\begin{thebibliography}{10}

\bibitem{refId0}
{\sc E.~Ahusborde, M.~Kern, and V.~Vostrikov}, {\em {Numerical simulation of
  two-phase multicomponent flow with reactive transport in porous media:
  application to geological sequestration of {CO2}}}, ESAIM: Proc., 50 (2015),
  pp.~21--39.

\bibitem{compgeo:10b}
{\sc L.~Amir and M.~Kern}, {\em A global method for coupling transport with
  chemistry in heterogeneous porous media}, Computat. Geosci., 14 (2010),
  pp.~465--481.
\newblock
  \href{http://dx.doi.org/10.1007/s10596-009-9162-x}{10.1007/s10596-009-9162-x}.

\bibitem{AppelPost:05}
{\sc C.~A.~J. Appelo and D.~Postma}, {\em Geochemistry, Groundwater and
  Pollution}, CRC Press, 2nd~ed., 2005.

\bibitem{audigane:hal-00564444}
{\sc P.~Audigane, I.~Gaus, I.~Czernichowski-Lauriol, K.~Pruess, and T.~Xu},
  {\em {Two-Dimensional reactive transport modeling of {CO2} injection in a
  saline aquifer at the Sleipner site, North sea}}, {American journal of
  science}, 307 (2007), pp.~974--1008.

\bibitem{bear:2010}
{\sc J.~Bear and A.~H.-D. Cheng}, {\em Modeling Groundwater Flow and
  Contaminant Transport}, vol.~23 of Theory and Applications of Transport in
  Porous Media, Springer, 2010.

\bibitem{doi:10.1137/100806485}
{\sc M.~Benzi and M.~A. Olshanskii}, {\em Field-of-values convergence analysis
  of augmented {Lagrangian} preconditioners for the linearized {Navier--Stokes}
  problem}, SIAM J. Numer. Anal., 49 (2011), pp.~770--788.

\bibitem{bethke:08}
{\sc C.~M. Bethke}, {\em Geochemical and Biogeochemical Reaction Modeling},
  Cambridge University Press, 2nd~ed., 2008.

\bibitem{bjor:97}
{\sc {\AA}.~Bj{\"o}rck}, {\em Numerical methods for least squares problems},
  Society for Industrial and Applied Mathematics, Philadelphie, 1996.

\bibitem{caikeyes:02}
{\sc X.-C. Cai and D.~E. Keyes}, {\em Nonlinearly preconditioned inexact
  {Newton} algorithms}, SIAM J. Sci. Comput., 24 (2002), pp.~183--200.

\bibitem{carray:10}
{\sc J.~Carrayrou}, {\em {Looking for some reference solutions for the reactive
  transport benchmark of MoMaS with SPECY}}, Computat. Geosci., 14 (2010),
  pp.~393--403.

\bibitem{Momascompar}
{\sc J.~Carrayrou, J.~Hoffmann, P.~Knabner, S.~Kr{\"a}utle, C.~de~Dieuleveult,
  J.~Erhel, J.~van~der Lee, V.~Lagneau, K.~U. Mayer, and K.~T.~B. MacQuarrie},
  {\em Comparison of numerical methods for simulating strongly nonlinear and
  heterogeneous reactive transport problems-the {MoMaS} benchmark case},
  Computat. Geosci., 14 (2010), pp.~483--502.

\bibitem{compgeo:10a}
{\sc J.~Carrayrou, M.~Kern, and P.~Knabner}, {\em Reactive transport benchmark
  of {MoMaS}}, Computat. Geosci., 14 (2010), pp.~385--392.
\newblock
  \href{http://dx.doi.org/10.1007/s10596-009-9157-7}{10.1007/s10596-009-9157-7}.

\bibitem{Carrayrou2002}
{\sc J.~Carrayrou, R.~Mos\'e, and P.~Behra}, {\em New efficient algorithm for
  solving thermodynamic chemistry}, Aiche J., 48 (2002), pp.~894--904.

\bibitem{carrmosbeh:04}
{\sc J.~Carrayrou, R.~Mos\'e, and P.~Behra}, {\em Operator-splitting procedures
  for reactive transport and comparison of mass balance errors}, J. Contam.
  Hydrol., 68 (2004), pp.~239--268.

\bibitem{WRCR:WRCR7807}
{\sc A.~Chilakapati, T.~Ginn, and J.~Szecsody}, {\em An analysis of complex
  reaction networks in groundwater modeling}, Water Resour. Res., 34 (1998),
  pp.~1767--1780.

\bibitem{deDieuleveult2010}
{\sc C.~de~Dieuleveult and J.~Erhel}, {\em {A global approach to reactive
  transport: application to the MoMas benchmark}}, Computat. Geosci., 14
  (2010), pp.~451--464.

\bibitem{Dieuleveult2009}
{\sc C.~de~Dieuleveult, J.~Erhel, and M.~Kern}, {\em A global strategy for
  solving reactive transport equations}, J. Comput. Phys., 228 (2009),
  pp.~6395--6410.

\bibitem{windtvdl:04}
{\sc L.~De~Windt, D.~Pellegrini, and J.~van~der Lee}, {\em Coupled modeling of
  cement/claystone interactions and radionuclides migration}, J. Contam.
  Hydrol., 68 (2004), pp.~165--182.

\bibitem{deuflhard:11}
{\sc P.~Deuflhard}, {\em {Newton} Methods for Nonlinear Problems. Affine
  Invariance and Adaptive Algorithms}, vol.~35 of Springer Series in
  Computational Mathematics, Springer, 2011.

\bibitem{erhel:hal-01584490}
{\sc J.~Erhel and T.~Migot}, {\em {Characterizations of Solutions in
  Geochemistry: Existence, Uniqueness and Precipitation Diagram}}.
\newblock working paper or preprint, Sept. 2017.

\bibitem{Erhel:2017:AGR:3067934.3068012}
{\sc J.~Erhel and S.~Sabit}, {\em Analysis of a global reactive transport model
  and results for the momas benchmark}, Math. Comput. Simul., 137 (2017),
  pp.~286--298.

\bibitem{ehrsabdieul:13}
{\sc J.~Erhel, S.~Sabit, and C.~de~Dieuleveult}, {\em Solving partial
  differential algebraic equations and reactive transport models}, in
  Developments in Parallel, Distributed, Grid and Cloud Computing for
  Engineering, B.~H.~V. Topping and P.~Iv\`anyi, eds., Computational Science,
  Engineering and Technology Series, Saxe Coburg Publications, 2013,
  pp.~151--169.

\bibitem{eymagallherb:00}
{\sc R.~Eymard, T.~Gallou\"et, and R.~Herbin}, {\em Finite volume methods}, in
  Handbook of Numerical Analysis, P.~G. Ciarlet and J.~L. Lions, eds.,
  vol.~{VII}, North--Holland, 2000, pp.~713--1020.

\bibitem{Fahs2008}
{\sc M.~Fahs, J.~Carrayrou, A.~Younes, and P.~Ackerer}, {\em On the efficiency
  of the direct substitution approach for reactive transport problems in porous
  media}, Water, Air, and Soil Pollution, 193 (2008), pp.~299--308.

\bibitem{fan2012fully}
{\sc Y.~Fan, L.~J. Durlofsky, and H.~A. Tchelepi}, {\em {A fully-coupled
  flow-reactive-transport formulation based on element conservation, with
  application to CO 2 storage simulations}}, Adv. Water Resour., 42 (2012),
  pp.~47--61.

\bibitem{golubvanloan4:13}
{\sc G.~H. Golub and C.~F. van Loan}, {\em Matrix Computations}, Johns Hopkins
  University Press, 4th~ed., 2013.

\bibitem{hammond:14}
{\sc G.~E. Hammond, P.~C. Lichtner, and R.~T. Mills}, {\em Evaluating the
  performance of parallel subsurface simulators: An illustrative example with
  {PFLOTRAN}}, Water Resources Research, 50 (2014), pp.~208--228.

\bibitem{hammvallicht05}
{\sc G.~E. Hammond, A.~Valocchi, and P.~Lichtner}, {\em Application of
  {J}acobian-free {N}ewton--{K}rylov with physics-based preconditioning to
  biogeochemical transport}, Adv. Water Resour., 28 (2005), pp.~359--376.

\bibitem{Hoffmann2009}
{\sc J.~Hoffmann, S.~Kr{\"a}utle, and P.~Knabner}, {\em A parallel
  global-implicit {2-D} solver for reactive transport problems in porous media
  based on a reduction scheme and its application to the momas benchmark
  problem}, Comput Geosci, 14 (2009), pp.~421--433.

\bibitem{hotackmos:04}
{\sc H.~Hoteit, P.~Ackerer, and R.~Mos{\'e}}, {\em Nuclear waste disposal
  simulations: Couplex test cases: Simulation of transport around a nuclear
  waste disposal site: The {COUPLEX} test cases (editors: {A}lain {B}ourgeat
  and {M}ichel {K}ern)}, Computat. Geosci., 8 (2004), pp.~99--124.

\bibitem{doi:10.1137/120883086}
{\sc V.~E. Howle, R.~C. Kirby, and G.~Dillon}, {\em Block preconditioners for
  coupled physics problems}, SIAM J. Sci. Comput., 35 (2013), pp.~S368--S385.

\bibitem{doi:10.1137/S1064827500377435}
{\sc I.~C.~F. Ipsen}, {\em A note on preconditioning nonsymmetric matrices},
  SIAM J. Sci. Comput., 23 (2001), pp.~1050--1051.

\bibitem{jaradreuzycoche:compgeo17}
{\sc D.~Jara~Heredia, J.-R. de~Dreuzy, and B.~Cochepin}, {\em {TReacLab}: an
  object oriented implementation of non intrusive operator splitting methods to
  couple independent transport and geochemical software}, Comput. Geosci.,
  (2017).
\newblock to appear.

\bibitem{Kelley95}
{\sc C.~T. Kelley}, {\em Iterative methods for linear and nonlinear equations},
  vol.~16 of Frontiers in Applied Mathematics, SIAM, Philadelphia, PA, 1995.
\newblock With separately available software.

\bibitem{Kelley03}
\leavevmode\vrule height 2pt depth -1.6pt width 23pt, {\em Solving Nonlinear
  Equations with {N}ewton's Method}, vol.~1 of Fundamentals of Algorithms,
  SIAM, Philadelphia, PA, 2003.
\newblock With separately available software.

\bibitem{kerntaak:10}
{\sc M.~Kern and A.~Taakili}, {\em Linear and nonlinear preconditioning for
  reactive transport}, in {XVIII} Conference on Computational Methods in Water
  Resources {CMWR XVIII}, J.~Carrera, P.~Binning, and G.~F. Pinder, eds., 2010.
\newblock
  \href{https://hal.inria.fr/inria-00581566/document}{https://hal.inria.fr/inria-00581566/document}.

\bibitem{Klawonn1999}
{\sc A.~Klawonn and G.~Starke}, {\em {Block triangular preconditioners for
  nonsymmetric saddle point problems: field-of-values analysis}}, Numer. Math.,
  81 (1999), pp.~577--594.

\bibitem{knollkeyes04}
{\sc D.~A. Knoll and D.~E. Keyes}, {\em Jacobian-free {Newton-Krylov} methods:
  a survey of approaches and applications}, J. Comput. Phys., 193 (2004),
  pp.~357--397.

\bibitem{knabner05}
{\sc S.~Kr{\"a}utle and P.~Knabner}, {\em A new numerical reduction scheme for
  fully coupled multicomponent transport-reaction problems in porous media},
  Water Resour. Res., 41 (2005).

\bibitem{knabner07}
\leavevmode\vrule height 2pt depth -1.6pt width 23pt, {\em A reduction scheme
  for coupled multicomponent transport-reaction problems in porous media:
  Generalization to problems with heterogeneous equilibrium reactions}, Water
  Resour. Res., 43 (2007).

\bibitem{lagneau:10}
{\sc V.~Lagneau and J.~van~der Lee}, {\em {HYTEC results of the MoMas reactive
  transport benchmark}}, Computat. Geosci., 14 (2010), pp.~435--449.

\bibitem{Machat2017}
{\sc H.~Machat and J.~Carrayrou}, {\em Comparison of linear solvers for
  equilibrium geochemistry computations}, Computational Geosciences, 21 (2017),
  pp.~131--150.

\bibitem{McQuarrieMayer:ESR05}
{\sc K.~T.~B. {MacQuarrie} and K.~U. {Mayer}}, {\em {Reactive transport
  modeling in fractured rock: A state-of-the-science review}}, Earth Science
  Reviews, 72 (2005), pp.~189--227.

\bibitem{AIC:AIC15506}
{\sc M.~Marinoni, J.~Carrayrou, Y.~Lucas, and P.~Ackerer}, {\em Thermodynamic
  equilibrium solutions through a modified {Newton Raphson} method}, AIChE
  Journal, 63 (2017), pp.~1246--1262.

\bibitem{Marty200958}
{\sc N.~C. Marty, C.~Tournassat, A.~Burnol, E.~Giffaut, and E.~C. Gaucher},
  {\em {Influence of reaction kinetics and mesh refinement on the numerical
  modelling of concrete/clay interactions}}, Journal of Hydrology, 364 (2009),
  pp.~58--72.

\bibitem{Marty2015}
{\sc N.~C.~M. Marty, O.~Bildstein, P.~Blanc, F.~Claret, B.~Cochepin, E.~C.
  Gaucher, D.~Jacques, J.-E. Lartigue, S.~Liu, K.~U. Mayer, J.~C.~L. Meeussen,
  I.~Munier, I.~Pointeau, D.~Su, and C.~I. Steefel}, {\em {Benchmarks for
  multicomponent reactive transport across a cement/clay interface}}, Computat.
  Geosci., 19 (2015), pp.~635--653.

\bibitem{MARZAL1994363}
{\sc P.~Marzal, A.~Seco, J.~Ferrer, and C.~Gabal\'on}, {\em {Modeling multiple
  reactive solute transport with adsorption under equilibrium and
  nonequilibrium conditions}}, Adv. Water Resour., 17 (1994), pp.~363--374.

\bibitem{WRCR:WRCR9205}
{\sc K.~U. Mayer, E.~O. Frind, and D.~W. Blowes}, {\em Multicomponent reactive
  transport modeling in variably saturated porous media using a generalized
  formulation for kinetically controlled reactions}, Water Resour. Res., 38
  (2002), pp.~13--1--13--21.
\newblock 1174.

\bibitem{meyer:bk00}
{\sc C.~D. Meyer}, {\em Matrix Analysis and Applied Linear Algebra}, {SIAM},
  2000.

\bibitem{Morel}
{\sc F.~M.~M. Morel and J.~G. Hering}, {\em Principles and Applications of
  Aquatic Chemistry}, Wiley, New-York, 1993.

\bibitem{doi:10.1137/S1064827599355153}
{\sc M.~F. Murphy, G.~H. Golub, and A.~J. Wathen}, {\em A note on
  preconditioning for indefinite linear systems}, SIAM J. Sci. Comput., 21
  (2000), pp.~1969--1972.

\bibitem{rubin:83}
{\sc J.~Rubin}, {\em Transport of reacting solutes in porous media: Relation
  between mathematical nature of problem formulation and chemical nature of
  reactions}, Water Resour. Res., 19 (1983), pp.~1231--1252.

\bibitem{sasc:86}
{\sc Y.~Saad and M.~H. Schultz}, {\em G{MRES}: a generalized minimal residual
  algorithm for solving nonsymmetric linear systems}, SIAM J. Sci. Statist.
  Comput., 7 (1986), pp.~856--869.

\bibitem{saalayorcarr98}
{\sc M.~Saaltink, C.~Ayora, and J.Carrera}, {\em A mathematical formulation for
  reactive transport that eliminates mineral concentrations}, Water Resour.
  Res., 34 (1998), pp.~1649--1656.

\bibitem{sacaay:00}
{\sc M.~Saaltink, J.~Carrera, and C.~Ayora}, {\em A comparison of two
  approaches for reactive transport modelling}, Journal of Geochemical
  Exploration, 69-70 (2000), pp.~97--101.

\bibitem{sacaay:01}
{\sc M.~Saaltink, J.~Carrera, and C.~Ayora}, {\em On the behavior of approaches
  to simulate reactive transport}, J. Contam. Hydrol., 48 (2001), pp.~213--235.

\bibitem{12985679}
{\sc M.~Saaltink, V.~Vilarrasa, F.~{De Gaspari}, O.~Silva, J.~Carrera, and
  T.~Roetting}, {\em {A method for incorporating equilibrium chemical reactions
  into multiphase flow models for {CO2} storage}}, Adv. Water resour., 62
  (2013), pp.~431--441.

\bibitem{Samper2010278}
{\sc J.~Samper, H.~Ma, J.~L. Cormenzana, C.~Lu, L.~Montenegro, and M.~.~A.
  Cu\~nado}, {\em {Testing Kd models of Cs+ in the near field of a {HLW}
  repository in granite with a reactive transport model}}, Physics and
  Chemistry of the Earth, Parts A/B/C, 35 (2010), pp.~278--283.

\bibitem{Samper2008}
{\sc J.~Samper, T.~Xu, and C.~Yang}, {\em {A sequential partly iterative
  approach for multicomponent reactive transport with CORE2D}}, Computat.
  Geosci., 13 (2008), pp.~301--316.

\bibitem{siegmosackjaff:97}
{\sc P.~Siegel, R.~Mos\'e, P.~Ackerer, and J.~Jaffr\'e}, {\em Solution of the
  advection-dispersion equation using a combination of discontinuous and mixed
  finite elements}, Int. J. Numer. Meth. Fl., 24 (1997), pp.~595--613.

\bibitem{SIN201762}
{\sc I.~Sin, V.~Lagneau, and J.~Corvisier}, {\em Integrating a compressible
  multicomponent two-phase flow into an existing reactive transport simulator},
  Adv, Water Resour., 100 (2017), pp.~62--77.

\bibitem{2017arXiv170405001S}
{\sc B.~{Southworth}, T.~{Manteuffel}, S.~{McCormick}, S.~{Munzenmaier}, and
  J.~{Ruge}}, {\em {Reduction-based Algebraic Multigrid for Upwind
  Discretizations}}, ArXiv e-prints,  (2017).

\bibitem{steeflicht:05}
{\sc C.~Steefel, D.~DePaolo, and P.~Lichtner}, {\em Reactive transport
  modeling: An essential tool and a new research approach for the earth
  sciences}, Earth and Planetary Science Letters, 240 (2005), pp.~539--558.

\bibitem{Steefel2015}
{\sc C.~I. Steefel, C.~A.~J. Appelo, B.~Arora, D.~Jacques, T.~Kalbacher,
  O.~Kolditz, V.~Lagneau, P.~C. Lichtner, K.~U. Mayer, J.~C.~L. Meeussen,
  S.~Molins, D.~Moulton, H.~Shao, J.~\~Sim\r{u}nek, N.~Spycher, S.~B. Yabusaki,
  and G.~T. Yeh}, {\em {Reactive transport codes for subsurface environmental
  simulation}}, Computat. Geosci., 19 (2015), pp.~445--478.

\bibitem{strang:introLA09}
{\sc G.~Strang}, {\em Introduction to Linear Algebra}, Wellesley-Cambridge
  Press, 4th~ed., 2009.

\bibitem{trentymichelECMOR:05}
{\sc L.~Trenty, A.~Michel, E.~Tillier, and Y.~L. Gallo}, {\em A sequential
  splitting strategy for {CO2} storage modelling}, in {ECMOR X} - 10th European
  Conference on the Mathematics of Oil Recovery, EAGE, 2006.

\bibitem{WRCR:WRCR5757}
{\sc A.~J. Valocchi and M.~Malmstead}, {\em Accuracy of operator splitting for
  advection-dispersion-reaction problems}, Water Resour. Res., 28 (1992),
  pp.~1471--1476.

\bibitem{WRCR:WRCR3036}
{\sc A.~J. Valocchi, R.~L. Street, and P.~V. Roberts}, {\em Transport of
  ion-exchanging solutes in groundwater: Chromatographic theory and field
  simulation}, Water Resources Research, 17 (1981), pp.~1517--1527.

\bibitem{White2011}
{\sc J.~A. White and R.~I. Borja}, {\em {Block-preconditioned {Newton--Krylov}
  solvers for fully coupled flow and geomechanics}}, Computat. Geosci., 15
  (2011), pp.~647--659.

\bibitem{eigtool:02}
{\sc T.~G. Wright}, {\em Eigtool}.
\newblock \texttt{http://www.comlab.ox.ac.uk/pseudospectra/eigtool/}, 2002.

\bibitem{doi:10.1137/S106482750037322X}
{\sc T.~G. Wright and L.~N. Trefethen}, {\em Large-scale computation of
  pseudospectra using {ARPACK and Eigs}}, SIAM J. Sci. Comput., 23 (2001),
  pp.~591--605.

\bibitem{yehtrip}
{\sc G.~T. Yeh and V.~S. Tripathi}, {\em A critical evaluation of recent
  developments in hydrogeochemical transport models of reactive multichemical
  components}, Water Resour. Res., 25 (1989), pp.~93--108.

\bibitem{WRCR:WRCR5571}
{\sc G.-T. Yeh and V.~S. Tripathi}, {\em A model for simulating transport of
  reactive multispecies components: Model development and demonstration}, Water
  Resour. Res., 27 (1991), pp.~3075--3094.

\bibitem{zhangyehparker:reactransbook}
{\sc F.~Zhang, G.~Yeh, and J.~Parker}, eds., {\em Groundwater Reactive
  Transport Models}, Bentham Publishers, 2012.

\end{thebibliography}
\bibliographystyle{siam}

\end{document}